\documentclass{amsart}
\usepackage{amssymb}
\newtheorem{theorem}{Theorem}[section]

\newtheorem{lemma}[theorem]{Lemma}

\newtheorem{conjecture}[theorem]{Conjecture}

\theoremstyle{definition}
\newtheorem{definition}[theorem]{Definition}
\newtheorem{example}[theorem]{Example}

\theoremstyle{remark}
\newtheorem{remark}[theorem]{Remark}

\numberwithin{equation}{section}

\begin{document}
\title[Subshifts with slowly growing numbers of follower sets]{Subshifts with slowly growing numbers of follower sets}

\begin{abstract}

For any subshift, define $F_X(n)$ to be the collection of distinct follower sets of words of length $n$ in $X$. Based on a similar result proved in \cite{OrmesPavlov}, we conjecture that if there exists an $n$ for which $|F_X(n)| \leq n$, then $X$ is sofic. In this paper, we prove several results related to this conjecture, including verifying it for $n \leq 3$, proving that the conjecture is true for a large class of coded subshifts, and showing that if there exists $n$ for which $|F_X(n)| \leq \log_2(n+1)$, then $X$ is sofic.

\end{abstract}

\date{}
\author{Thomas French}
\address{Thomas French\\
Department of Mathematics\\
University of Denver\\
2280 S. Vine St.\\
Denver, CO 80208}
\email{tfrench9@du.edu}
\author{Nic Ormes}
\address{Nic Ormes\\
Department of Mathematics\\
University of Denver\\
2280 S. Vine St.\\
Denver, CO 80208}
\email{normes@du.edu}
\urladdr{www.math.du.edu/$\sim$ormes/}
\author{Ronnie Pavlov}
\address{Ronnie Pavlov\\
Department of Mathematics\\
University of Denver\\
2280 S. Vine St.\\
Denver, CO 80208}
\email{rpavlov@du.edu}
\urladdr{www.math.du.edu/$\sim$rpavlov/}

%\thanks{}
%\keywords{}
%\renewcommand{\subjclassname}{MSC 2010}
\subjclass[2010]{Primary: 37B10; Secondary: 37B05}
%below are the definitions for some subject classification numbers
%22D40 Ergodic theory on groups 
%37A05 Measure-preserving transformations 
%37A15 General groups of measure preserving transformations 
%37A35 Entropy and other invariants, isomorphism, classification 
%37B10 symbolic dynamics
%37B40 topological entropy
%37B50 multi-dimensional shifts of finite type, tiling systems 
%37C40 smooth ergodic theory, invariant measures
%37C45 dimension theory of dynamical systems
%37C85 Dynamics of group actions other than Z and R, and foliations 
%37C99 smooth dynamical systems, general theory
%37D35 thermodynamic formalism, variational principles, equilibrium states
\maketitle
%\tableofcontents

\section{Introduction}
\label{intro}

Let $X$ be a subshift, i.e. a closed, shift-invariant subset of $A^{\mathbb{Z}}$ where $A$ is some finite set. 
In this paper, we consider \emph{follower sets} for words $w$ appearing in $X$. By the follower set
of $w$ we mean the set of all one-sided infinite sequences $s$ which may follow $w$ in some point of $X$; 
see Section~\ref{defns} for a formal definition. It is well known that the
number of distinct follower sets in $X$ is finite if and only if the subshift $X$ is 
sofic \cite{LindMarcus}. In this paper, we consider the question of whether 
a sufficiently slow growth rate in the number of distinct 
follower sets for words of length $n$ in $X$ implies that $X$ is a sofic subshift. 

More specifically, let $F_X(n)$ denote the set of distinct follower 
sets in $X$ for words of length $n$. We make the following conjecture:

\begin{conjecture}\label{mainconj}
For a subshift $X$, if there exists $n$ for which $|F_X(n)| \leq n$, then $X$ is sofic.
\end{conjecture}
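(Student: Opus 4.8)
The plan is to prove the contrapositive: to show that every non-sofic subshift $X$ satisfies $|F_X(n)| \geq n+1$ for all $n \geq 1$, so that $|F_X(n)| \leq n$ can hold only for sofic $X$. I would work with the classical characterization that $X$ is sofic if and only if it has finitely many distinct follower sets, together with the follower-set transition rule $F(wa) = \{s : as \in F(w)\}$, which shows that $F(wa)$ depends on $w$ only through $F(w)$. This organizes all follower sets into a deterministic automaton whose initial state is $F(\epsilon) = X^+$ and whose set of states reachable in exactly $n$ steps is precisely $F_X(n)$; the subshift $X$ is sofic exactly when this automaton is finite. The conjecture then amounts to a sharp linear lower bound on the number of states at each depth for infinite automata of this type.

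The model to imitate is the extender-set theorem of \cite{OrmesPavlov}, where one has $|E_X(n)| \leq |E_X(n+1)|$ together with the clean dichotomy that equality at a single $n$ forces the extender automaton to stabilize, hence $X$ sofic; strict growth then gives $|E_X(n)| \geq n+1$ in the non-sofic case. First I would attempt to transport this scheme to follower sets by proving a closure lemma: if $|F_X(n+1)| \leq |F_X(n)|$ for some $n$, then the follower automaton stabilizes and $X$ is sofic. Combined with the base case $|F_X(1)| \geq 2$ for non-sofic $X$ (itself the $n=1$ instance of the conjecture, provable by hand), an induction would then deliver the desired bound.

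The main obstacle, and the reason this resists the extender-set argument, is that follower-set complexity is \emph{not} monotone. Prepending a symbol only shrinks follower sets, since $F(aw) \subseteq F(w)$, so distinctions among words that are visible to their left are invisible to their follower sets; consequently appending a symbol can merge two previously distinct follower sets, and $|F_X(n+1)|$ can be strictly smaller than $|F_X(n)|$ without $X$ being sofic. The crux is therefore to control these collapses: to bound how many follower sets can merge under a one-symbol extension and to show that every such merge is eventually paid back by the appearance of genuinely new follower states, at an amortized rate of at least one per unit length. I expect this bookkeeping over the births and deaths of follower states to be the central difficulty, and it is exactly where a naive imitation of the extender-set proof breaks down.

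Absent a full resolution, I would secure the three regimes the abstract advertises. For $n \leq 3$ the admissible configurations of follower sets are few enough to enumerate directly, verifying that $|F_X(n)| \leq n$ forces finiteness of the automaton. For coded subshifts I would parametrize the follower set of a word by the possible ways its suffix completes a code word; this finite set of ``phases'' makes the collapses transparent and lets a monotonicity-or-soficity argument run for a large class. Finally, for the sparse regime $|F_X(n)| \leq \log_2(n+1)$, equivalently $n+1 \leq 2^{|F_X(n)|}$, I would argue by a synchronization and pigeonhole scheme: the extreme scarcity of follower sets relative to the length forces heavy repetition of follower states along long words, and the determinism of the follower automaton turns such repetitions into eventual periodicity of the follower structure, bounding the total number of follower sets and giving soficity in this regime alone.
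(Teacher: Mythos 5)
You have not proved the statement, and neither does the paper: this is Conjecture~\ref{mainconj}, which the authors explicitly state they are unable to prove, so there is no proof of it to match. Your write-up is a research plan rather than an argument. Its central step --- the ``bookkeeping over the births and deaths of follower states'' that would show every merge of follower sets under a one-symbol extension is eventually compensated by new follower sets at an amortized rate of one per unit length --- is precisely the open problem, and you leave it entirely unresolved; you then retreat to the three partial regimes, which are the paper's actual theorems, but securing those does not establish the conjecture. One specific assertion also needs flagging: your proposed ``closure lemma,'' that $|F_X(n+1)| \leq |F_X(n)|$ for a single $n$ forces the follower automaton to stabilize, is proved nowhere and is in tension with your own observation that follower-set counts need not be monotone. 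The only version of this that the paper establishes (Theorem~\ref{unions}) is the \emph{containment} statement $F_X(n) \subseteq \bigcup_{\ell \leq n-1} F_X(\ell) \Rightarrow X$ sofic, which follows from Lemma~\ref{lengthen} by repeatedly shortening prefixes; a cardinality inequality alone gives you no such replacement mechanism.

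Where your sketches of the partial results can be compared to the paper, they diverge in substance. The paper's route to the $\log_2(n+1)$ bound is not a ``synchronization and pigeonhole scheme'': it is the observation (Lemma~\ref{unionslem}) that every follower set of a shorter word is a nonempty union of the $k$ sets in $F_X(n)$, so $\bigl|\bigcup_{\ell \leq n} F_X(\ell)\bigr| \leq 2^k - 1 \leq n$, whence soficity by Theorem~\ref{morsehedlund}. The coded-subshift result (Theorem~\ref{coded}) is not about general coded subshifts but about a specific construction over a sofic base with a marker letter $c$, and its proof pigeonholes the $n+1$ classes of $n$-letter words according to the position of the last $c$; your ``phases of code-word completion'' idea is in the right spirit but is not developed to the point where one could check it. The $n \leq 3$ case is indeed a finite enumeration of configurations, as you say, but it occupies several pages of delicate case analysis in the paper and cannot be waved through. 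In short: the framework you set up (the deterministic follower automaton, the contrast with the extender-set result of \cite{OrmesPavlov}, the identification of non-monotonicity as the obstruction) is a correct diagnosis of why the problem is hard, but no part of the proposal constitutes a proof of the conjecture.
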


We are unable to prove this conjecture presently, but prove some supporting results in this paper. 
Firstly, we prove that if $|F_X(n)| \leq \log_2(n+1)$ for some 
$n \geq 1$, then $X$ is sofic (Theorem \ref{log}). We prove a version of the conjecture relating to $|\bigcup_{\ell \leq n} F_X(n)|$ rather than $|F_X(n)|$ (Theorem ~\ref{morsehedlund}). We also prove Conjecture~\ref{mainconj}
for $n=1,2,3$ (Theorems~\ref{n=1}, \ref{n=2}, and \ref{n=3}). 

Conjecture~\ref{mainconj} is motivated by a number of results, among them the following classical theorem of Morse and Hedlund. 

\begin{theorem}[\cite{MorseHedlund}]\label{mhthm}
For a subshift $X$, if there exists an $n$ such that the number of
words of length $n$ is less than or equal to $n$, then $X$ is a finite collection of periodic points.
\end{theorem}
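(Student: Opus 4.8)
The plan is to work with the complexity function $p(m) := |L_m(X)|$, the number of words of length $m$ in the language of $X$, and to show that the hypothesis $p(n) \le n$ forces the complexity to stop growing, which in turn forces two-sided periodicity of every point. First I would record the elementary monotonicity fact that $p$ is non-decreasing: since every word of length $m$ appearing in $X$ extends to the right to a word of length $m+1$ in $X$, the last-symbol-deletion map $L_{m+1}(X) \to L_m(X)$ is surjective, so $p(m) \le p(m+1)$ for all $m$.

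Next I would use the hypothesis to locate a length at which the complexity stabilizes. If $X$ is empty the conclusion is trivial, so assume $p(1) \ge 1$. If $p$ were strictly increasing on $\{1, \dots, n\}$, then $p(n) \ge p(1) + (n-1) \ge n$, with equality forcing $p(1) = 1$; but $p(1) = 1$ means a single symbol appears in $X$, so $X$ is the single constant point and $p(2) = 1$, contradicting strict growth. Hence there exists $k < n$ with $p(k) = p(k+1)$.

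The heart of the argument is then to convert the equality $p(k) = p(k+1)$ into two-sided determinism. Since the last-symbol-deletion map $L_{k+1}(X) \to L_k(X)$ is a surjection between finite sets of equal cardinality, it is a bijection, so every word of length $k$ has a unique right extension in $X$; the symmetric argument using first-symbol deletion shows every word of length $k$ also has a unique left extension. Consequently, in the functional graph whose vertices are the words of $L_k(X)$ and whose edges record ``is immediately followed by'' (so that an edge from $w = w_1 \cdots w_k$ goes to $w_2 \cdots w_k a$ exactly when $w_1 \cdots w_k a \in L_{k+1}(X)$), every vertex has out-degree and in-degree equal to $1$. Such a graph is a disjoint union of cycles, so the window-shift map $\phi$ sending $x_{[i,i+k-1]} \mapsto x_{[i+1,i+k]}$ is a well-defined permutation of the finite set $L_k(X)$. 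For any $x \in X$, the bi-infinite sequence of its length-$k$ windows is the orbit $(\phi^i(x_{[0,k-1]}))_{i \in \mathbb{Z}}$, which lies on a single cycle and is therefore periodic in both directions with period at most $p(k)$; since overlapping windows determine $x$, the point $x$ is periodic and is recovered from the single window $x_{[0,k-1]}$. As there are only $p(k)$ choices for this window, $X$ is finite and consists entirely of periodic points.

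I expect the main obstacle to be this last step: upgrading ``complexity stabilizes'' to \emph{two-sided} periodicity. The equality $p(k) = p(k+1)$ most readily yields unique right extension, hence only eventual periodicity to the right; the care lies in also extracting unique left extension so that $\phi$ is invertible, since it is precisely invertibility that excludes merely pre-periodic points and guarantees a finite collection of genuine periodic orbits rather than a larger eventually-periodic set.
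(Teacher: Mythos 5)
The paper does not actually prove this statement: Theorem~\ref{mhthm} is quoted as a classical result of Morse and Hedlund and used as motivation, so there is no in-paper proof to compare against. Judged on its own, your argument is correct and complete, and it is essentially the standard proof. The chain of reasoning --- monotonicity of $p(m)=|L_m(X)|$ via surjectivity of symbol deletion, the pigeonhole step producing $k<n$ with $p(k)=p(k+1)$ (with the degenerate cases $X=\varnothing$ and $p(1)=1$ correctly split off), and the upgrade of the resulting surjections between equal finite sets to bijections --- is sound. You are also right to flag the two-sidedness issue as the delicate point, and you handle it properly: unique right extension alone would only give that the Rauzy graph on $L_k(X)$ has out-degree one (hence only forward eventual periodicity), whereas the symmetric first-symbol-deletion bijection gives in-degree one, so the graph is a disjoint union of cycles and the window map $\phi$ is a genuine permutation of $L_k(X)$. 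From there the conclusion that every point is periodic with period at most $p(k)$ and that $|X|\le p(k)$ follows as you say. The only cosmetic remark is that the sentence locating $k$ interleaves the case $p(1)=1$ (where you should simply stop, having proved the theorem) with the contradiction to strict growth; separating ``if $p(1)=1$ then $X$ is a single fixed point and we are done'' from ``otherwise $p(1)\ge 2$ and strict growth forces $p(n)\ge n+1$'' would read more cleanly, but the logic is not in error.
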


An equivalent formulation is that a bound of $n$ on the number of words of length $n$ 
implies a uniform bound on the number of words of length $n$. 
With this phrasing, Conjecture~\ref{mainconj} is equivalent to a version of Theorem~\ref{mhthm} 
with \lq \lq words of length $n$\rq \rq \ replaced by \lq \lq follower sets of words of length $n$\rq \rq.  

Another motivation is recent work of the three authors. 
In \cite{French}, building on an example of Delacourt, the first author showed that even when bounded, 
the sequence $\{|F_X(n)|\}$ can exhibit some surprising behavior. In particular, 
he showed that while the sequence $\{|F_X(n)|\}$ is always eventually periodic, it is not 
necessarily eventually constant. In fact, the gaps between consecutive terms in the periodic portion
can be arbitrarily prescribed.
Even more closely related, the second and third authors proved in \cite{OrmesPavlov} that Conjecture~\ref{mainconj}
holds if follower sets are replaced by so-called \emph{extender sets}. 
For a word $w$ appearing in $X$, the extender set of $w$ is the set of
all pairs $(p,s)$ of a left-infinite sequence $p$ and a right-infinite sequence $s$ such that 
the concatenation $pws$ forms a legal point in $X$. Let $E_X(n)$ denote the 
set of distinct extender sets for words of length $n$ in $X$.
\begin{theorem}[\cite{OrmesPavlov}]
Let $X$ be a subshift. The following are equivalent. 
\begin{enumerate}
\item $X$ is sofic
\item the sequence $\{|E_X(n)|\}$ is uniformly bounded
\item for some $n \geq 1$, $E_X(n) \leq n$. 
\end{enumerate}
\label{thm:OP}
\end{theorem}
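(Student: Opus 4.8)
The plan is to prove the cycle of implications $(1)\Rightarrow(2)\Rightarrow(3)\Rightarrow(1)$, with essentially all of the content concentrated in $(3)\Rightarrow(1)$, which I would set up as the exact analogue for extender sets of the Morse--Hedlund theorem (Theorem~\ref{mhthm}). The two easy arrows come first. For $(2)\Rightarrow(3)$: if $|E_X(n)|\le C$ for all $n$, then any $n\ge C$ satisfies $|E_X(n)|\le C\le n$, so $(3)$ holds. For $(1)\Rightarrow(2)$: if $X$ is sofic, fix a finite labeled-graph presentation with vertex set $V$; the extender set of a word $w$ is then a function of the relation $R_w\subseteq V\times V$ recording the $(\text{initial},\text{terminal})$ vertex pairs joined by a path labeled $w$. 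Since there are only finitely many subsets of $V\times V$, there are only finitely many extender sets in $X$, and in particular the layer sizes $|E_X(n)|$ are uniformly bounded. (We will reuse in the other direction the well-known fact from \cite{LindMarcus} that $X$ is sofic if and only if it has finitely many distinct follower sets, together with the observation that the follower set of $w$ is the projection of $E_X(w)$ onto its second coordinate, so that $|F_X(n)|\le|E_X(n)|$.)

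The engine for $(3)\Rightarrow(1)$ is a congruence property of extender sets. I would first prove the \emph{Congruence Lemma}: if $E_X(w)=E_X(v)$, then for all words $x,y$ the concatenation $xwy$ is legal if and only if $xvy$ is, and in that case $E_X(xwy)=E_X(xvy)$. This follows directly from the definition, since $(p,s)\in E_X(xwy)$ if and only if $(px,ys)\in E_X(w)$, and one then reduces to single-letter extensions. From the same bookkeeping I would extract the key structural fact, a \emph{slice/reassembly} identity: writing the right $a$-slice of $E_X(u)$ as $\{(p,s)\in E_X(u):s_0=a\}$, each such slice is in canonical bijection with $E_X(ua)$, and $E_X(u)$ is the disjoint union of its slices. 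Consequently $E_X(u)$ is determined by the tuple $(E_X(ua))_{a\in A}$ of its one-letter right extensions (and symmetrically by its left extensions). In particular the operations ``append $a$'' and ``prepend $a$'' are well defined as partial maps on the \emph{set} of extender sets, independently of which representative word is used.

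The heart of the argument, and the step I expect to be the main obstacle, is the \emph{Monotonicity Lemma}: $|E_X(n)|\le|E_X(n+1)|$ for all $n$. The difficulty is genuine: the naive candidate maps between levels are not well behaved, because a single one-letter extension $E_X(ua)$ recovers only one slice of $E_X(u)$, so neither the prefix-class nor the suffix-class of a length-$(n+1)$ word is determined by its extender set, and the reassembly identity only yields the crude bound $|E_X(n)|\le(|E_X(n+1)|+1)^{|A|}$. Obtaining the sharp inequality $|E_X(n)|\le|E_X(n+1)|$ requires a global argument separating all length-$n$ classes simultaneously through their extensions (for instance via a Hall-type matching on the bipartite incidence between level-$n$ and level-$(n+1)$ classes), and this is where the special structure of extender sets—as opposed to follower sets, whose counts need not be monotone—must be exploited. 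Alongside it I would prove a \emph{Stabilization Lemma}: if $|E_X(k)|=|E_X(k+1)|$, then the reassembly forces the extender class of every longer word to be determined by length-$(k+1)$ data through the Congruence Lemma, so that only finitely many extender sets occur; hence $|F_X(n)|\le|E_X(n)|$ is bounded, there are finitely many follower sets, and $X$ is sofic.

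With these two lemmas in hand, $(3)\Rightarrow(1)$ closes exactly as in Morse--Hedlund. The empty word has a single extender set, so $|E_X(0)|=1$, and by monotonicity the integers $1=|E_X(0)|\le|E_X(1)|\le\cdots\le|E_X(n)|\le n$ form a nondecreasing chain. If every consecutive inequality were strict we would get $|E_X(n)|\ge 1+n>n$, a contradiction; hence $|E_X(k)|=|E_X(k+1)|$ for some $k<n$, and the Stabilization Lemma makes $X$ sofic. This completes the cycle and simultaneously explains the analogy advertised in the introduction: monotonicity plus stabilization is precisely the statement that, for extender sets, the count strictly increases until it stabilizes, mirroring the behavior of word complexity in Theorem~\ref{mhthm}.
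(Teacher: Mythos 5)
First, a structural point: the paper does not actually prove Theorem~\ref{thm:OP}; it is imported verbatim from \cite{OrmesPavlov}, so there is no internal proof to compare against. Judged on its own terms, your proposal is an outline whose two load-bearing steps are missing. The implications $(1)\Rightarrow(2)\Rightarrow(3)$, the Congruence Lemma, the slice/reassembly identity, and the reduction ``finitely many extender sets $\Rightarrow$ finitely many follower sets $\Rightarrow$ sofic'' are all correct and routine. But the Monotonicity Lemma ($|E_X(n)|\le|E_X(n+1)|$) and the Stabilization Lemma ($|E_X(k)|=|E_X(k+1)|$ implies soficity) together \emph{are} the theorem, and neither is proved. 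For monotonicity you explicitly concede the difficulty and gesture at ``a Hall-type matching,'' but you give no reason why the bipartite incidence between level-$n$ and level-$(n+1)$ extender classes should satisfy Hall's condition; the only inequality your reassembly identity actually yields is the crude $|E_X(n)|\le(|E_X(n+1)|+1)^{|A|}$, which you yourself note is insufficient. For stabilization, the assertion that ``the reassembly forces the extender class of every longer word to be determined by length-$(k+1)$ data'' does not follow from equality of counts: each level-$k$ class $E$ produces up to $|A|$ classes at level $k+1$ via the slice maps, and the tuple of slices determines $E$, but nothing in this bookkeeping converts $|E_X(k)|=|E_X(k+1)|$ into the statement that higher levels introduce no new classes. (Contrast Morse--Hedlund, where $p(k)=p(k+1)$ forces each length-$k$ word to have a \emph{unique} extension because each has at least one; there is no analogous one-slice-per-class statement here.)

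The published argument of \cite{OrmesPavlov} does not proceed by a matching argument at all; it runs through a rigidity property special to extender sets, namely that if $|w|\le|v|$ and $E_X(w)\subseteq E_X(v)$ then $E_X(w)=E_X(v)$. The crucial feature making this even meaningful is that extender sets of words of \emph{different} lengths are literally comparable as subsets of $A^{-\mathbb{N}}\times A^{\mathbb{N}}$ --- exactly the feature that fails for follower sets, and the reason the follower-set version remains a conjecture in this paper. Both the counting estimate and the soficity conclusion are extracted from that containment lemma. So while your Morse--Hedlund framing correctly describes the shape of the result, the proposal as written does not contain the idea that makes it true; the steps you flag as obstacles are genuine gaps, not deferred routine verifications.
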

It is then natural to ask whether the above holds for follower sets as well, though the question seems more difficult.

We remark that one obvious approach would be to attempt to use Theorem \ref{thm:OP} above. 
In other words, one might attempt to prove that a small number of follower sets implies a small
number of extender sets, and therefore soficity. Indeed, this still may be an avenue to a proof. 
However, in Example~\ref{dude}, we show that the sequence $\{|E_X(n)|\}$ may grow exponentially while
$\{|F_X(n)|\}$ grows linearly, meaning that this approach may not be enough on its own.
%can be vastly different from the number of \lq \lq predecessor sets\rq \rq\ and consequently quite different from the overall number of extender sets. 

\section{Definitions and preliminaries}
\label{defns}
We begin with a list of definitions. 
Let $A$ denote a finite set, which we will refer to as our alphabet, elements of $A$ will be referred to as letters. 

\begin{definition}
A \textbf{subshift} $X$ on an alphabet $A$ is some subset of 
$A^{\mathbb{Z}}$ which is shift-invariant and closed in the product topology.
\end{definition}

\begin{definition}
A \textbf{word} over $A$ is a member of $A^n$ for some $n \in \mathbb{N}$, which we call the \textbf{length} of $w$. We use $\varnothing$ to denote the \textbf{empty word}, the word of length zero.
\end{definition}

\begin{definition} For any words $v \in A^n$ and $w \in A^m$, we define the \textbf{concatenation} $vw$ to be the word in $A^{n+m}$ whose first $n$ letters are the letters forming $v$ and whose next $m$ letters are the letters forming $w$.
\end{definition}

\begin{definition} For a word $u \in A^n$, if $u$ can be written as the concatenation of two words $u=vw$ then we say that 
$v$ is a \textbf{prefix} of $u$ and that $w$ is a \textbf{suffix} of $u$. 
\end{definition}

\begin{definition} 
The \textbf{language} of a subshift $X$, denoted by $L(X)$, is the set of all words which appear in points of $X$. For any finite $n \in \mathbb{N}$, define $L_n(X) = L(X) \cap A^n$, the set of words in the language of $X$ with length $n$.
\end{definition}

\begin{definition}
For any subshift $X$ on an alphabet $A$, and any word $w$ in the language of $X$, we define the \textbf{follower set of $w$ in $X$}, $F_X(w)$, to be the set of all right-infinite sequences $s \in A^\mathbb{N}$ such that the infinite word $ws$ occurs in some point of $X$. (Note that $F_X(\varnothing)$ is simply the set of all right-infinite sequences appearing in any point of $X$). Similarly, we define the \textbf{predecessor set of $w$ in $X$}, written $P_X(w)$, to be the set of all left-infinite sequences $p \in A^{-\mathbb{N}}$ such that $pw$ occurs in some point of $X$.

%\textcolor{red}{THINK ABOUT WHETHER TO USE WORDS INSTEAD...}

\end{definition}

\begin{definition}
For any word $w \in L(X)$, we say that $w$ is \textbf{shortenable} if there exists $v \in L(X)$ with strictly shorter length than $w$ such that $F_X(w) = F_X(v)$.
\end{definition}

\begin{definition}
For any subshift $X$ over the alphabet $A$, and any word $w$ in the language of $X$, we define the \textbf{extender set of $w$ in $X$}, $E_X(w)$, to be the set of all pairs $(p, s)$ where $p$ is a left-infinite sequence of symbols in $A$, $s$ is a right-infinite sequence of symbols in $A$, and $pws$ is a point of $X$. 

%\textcolor{red}{THINK ABOUT WHETHER TO USE WORDS INSTEAD...}

\end{definition}

\begin{definition}
For any positive integer $n$, define the set $F_X(n) = \{F_X(w) \ | \ w \in L_{n}(X)\}$. Thus the cardinality $|F_X(n)|$ is the number of distinct follower sets of words of length $n$ in $X$. Similarly, define $E_X(n) = \{E_X(w) \ | \ w \in L_n(X)\}$ and $P_X(n) = \{P_X(w) \ | \ w \in L_n(X)\}$, so that $|P_X(n)|$ and $|E_X(n)|$ are the numbers of distinct extender sets of words of length $n$ in $X$ and predecessor sets of words of length $n$ in $X$ respectively.
\end{definition}

\begin{definition}
A subshift $X$ is \textbf{sofic} if it is the image of a shift of finite type under a continuous shift-commuting map. 
\end{definition}

Equivalently, sofic shifts are those with only finitely many follower sets, that is, a shift $X$ is sofic iff $\{F_X(w) \ | \ w \text{ in the language of $X$}\}$ is finite (See Theorem 3.2.10 of \cite{LindMarcus}). The same equivalence exists for extender sets: X is sofic iff $\{E_X(w) \ | \ w \text{ in the language of $X$} \}$ is finite. (See Lemma 3.4 of \cite{OrmesPavlov})
%(see ~\cite{OrmesPavlov}) 

\section{An example with many more predecessor sets than follower sets}
\label{example}

\begin{example}\label{dude}
There exists a subshift $X$ such that for every $n$, $|F_X(n)| = 2n + 1$ and for every $n > 6$, $|P_X(n)| \geq 2^{\lfloor n/4\rfloor }$. 
\end{example}

\begin{proof}

Define a labeled graph $G$ as follows: the vertex set is $V = \mathbb{Z}^+ = \{0,1,2,\ldots\}$. From any vertex $n$ are two outgoing edges: one leads to $n+1$ and is labeled with $U$ (for `up'), and the other leads to $\lfloor n/2 \rfloor$ and is labeled with $D$ (for `down'), unless it is the lone self-loop in the graph from $0$ to itself, in which case it is labeled with $E$ (for `equals.') Then, define a subshift $X$ with alphabet $\{D, U, E\}$ whose language consists of all labels of finite paths on $G$. For example, since from $17$ one could follow $D$ (to $8$), $D$ (to $4$), $U$ (to $5$), $U$ (to $6$), $D$ (to $3$), $D$ (to $1$), $D$ (to $0$), and then $E$ (to $0$), $DDUUDDDE$ would be a word in the language of $X$. An example of a word not in the language of $X$ would be $EUUDDD$, since $E$ must terminate at $0$, and then following $U$ would take you to $1$, another $U$ to $2$, $D$ to $1$, $D$ to $0$, and another $D$ is not legal from $0$. Note that $G$ is right-resolving, i.e. given an initial vertex $n$ and label $a$, there is at most one edge with initial vertex $n$ labeled by $a$.

We will need two auxiliary notations: for any $w \in L(X)$, denote by $T_G(w)$ the set of terminal states of paths in $G$ labeled by $w$, and by $I_G(w)$ the set of initial states of paths in $G$ labeled by $w$. Similarly, for any $n \in V$, denote by $F_G(n)$ the set of labels of right-infinite paths in $G$ with initial state $n$, and by $P_G(n)$ the set of labels of left-infinite paths in $G$ with terminal state $n$. It should be clear that for any $w$, $F_X(w) = \bigcup_{n \in T_G(w)} F_G(n)$, and $P_X(w) = \bigcup_{n \in I_G(w)} P_G(n)$.\\

\textbf{Claim 1:} For every $n$, $|F_X(n)| = 2n+1$.

Fix any $n$, and consider any $w \in L_n(X)$ which contains an $E$. Since $E$ can only terminate at $0$, and $G$ is right-resolving, there is only one possible terminal vertex of a path in $G$ labeled by $w$, and so $T_G(w) = \{k\}$ for some $k$, and correspondingly, 
$F_X(w) = F_G(k)$. Since the largest vertex in $V$ that can be reached from $0$ via a path of length less than $n$ is $n-1$, $0 \leq k \leq n-1$. We claim that for $k < k' \in [0,n-1]$, $F_G(k) \neq F_G(k')$; if $2^m$ is the smallest power of $2$ greater than $k$, then the reader may check that $U^{2^m - k -1} D^{m} E E E E \ldots $ is in $F_G(k)$ but not $F_G(k')$. Finally, we note that for each 
$k \in [0,n-1]$, the word $w = E^{n-k} U^k$ has $T_G(w) = \{k\}$, and so all $n$ of the distinct follower sets 
$F_G(0), F_G(1), \ldots , F_G(n-1)$ 
are in $F_X(n)$. 

Now, consider any $w \in L_n(X)$ which does not contain an $E$. We will prove by induction on $n$ that $T_G(w) = [k,\infty)$ for some $0 \leq k \leq n$. The hypothesis is easy for $n = 1$; $T_G(U)$ is clearly $[1,\infty)$, and $T_G(D)$ is similarly clearly $[0,\infty)$. Now, suppose that the inductive hypothesis is true for $n$, and consider $w \in L_{n+1}(X)$. We can of course represent $w = w' a$, where $w' \in L_n(X)$. By the inductive hypothesis, $T_G(w') = [k,\infty)$ for $0 \leq k \leq n$. The reader can verify that if $a = D$, then $T_G(w) = [\lfloor k/2 \rfloor, \infty)$, and if $a = U$, then 
$T_G(w) = [k+1, \infty)$, completing the inductive step. The proof is then completed, and so $F_G(w) = \bigcup_{i \geq k} F_G(i)$ for some $k \in [0,n]$. 

We claim that for 
$k < k' \in [0,n]$, $\bigcup_{i \geq k} F_G(i) \neq \bigcup_{i \geq k'} F_G(i)$; again, if $2^m$ is the smallest power of $2$ greater than $k$, then 
$U^{2^m - k - 1} D^{m} E E E \ldots$ is in $F_G(k)$ but not $F_G(i)$ for any $i \geq k'$. We note that for each $k \in [0,n]$, the word $w = D^{n-k} U^k$ has $T_G(w) = [k,\infty)$, and so follower sets 
$\bigcup_{i \geq k} F_G(i)$ are in $F_X(n)$ for $k = 0, 1, 2, \ldots, n$.

There are therefore $n$ distinct follower sets of words $w \in L_n(X)$ containing an $E$ and $n+1$ distinct follower sets of words $w \in L_n(X)$ not containing an $E$. To prove the claim that there are $2n+1$ in total, we verify that for all $k, k' \in [0,n]$, $\bigcup_{i \geq k'} F_G(i) \neq F_G(k)$. If $k \neq k'$, we have already distinguished these sets by identifying an element of 
$F_G(\min(k,k'))$ that is not in $F_G(i)$ for any $i > \min(k,k')$.
If $k' = k$ and $2^m>k$, then $D^{m+1} E E E \ldots$ is in $F_G(2^m) \subseteq \bigcup_{i \geq k} F_G(i)$, but not in $F_G(k)$.
This completes the proof of the claim. \\

\textbf{Claim 2:} For every $n > 6$, $|P_X(n)| \geq 2^{\lfloor n/4\rfloor }$. 

We will consider the set $S$ of all $w \in \{D,U,E\}^n$ which end with $D^{\lceil n/2 \rceil - 1} E$, contain no other $E$, and do not contain consecutive $U$ symbols. Clearly $|S|$ is the number of $\left(\lfloor n/2 \rfloor \right)$-letter words on $\{D,U\}$ without consecutive $U$ symbols, which is greater than or equal to $2^{\lfloor n/4 \rfloor}$ (simply freely choose the first letter, force the second to be $D$, freely choose the third letter, and so on). Therefore, it suffices to show that $S \subseteq L_n(X)$ and that for $w \neq w' \in S$, $P_X(w) \neq P_X(w')$. We will verify both claims by the auxiliary claim that for any $w \in S$, $I_G(w)$ is a nonempty finite interval of integers, and that if $w \neq w' \in S$, then $I_G(w) \neq I_G(w')$. Clearly the fact that $I_G(w) \neq \varnothing$ will imply that $w \in L_n(X)$. Also, if $I_G(w) \neq I_G(w')$, then one can choose $k \in I_G(w) \triangle I_G(w')$. The reader may check that $\ldots EEEU^k \in P_G(k)$ and is not in $P_G(k')$ for any $k' \neq k$, and so $P_X(w) = \bigcup_{i \in I_G(w)} P_G(i)$ and 
$P_X(w') = \bigcup_{i \in I_G(w')} P_G(i)$ are distinct. 

It remains only to prove the auxiliary claim. Consider any $w \in S$. We will work backwards from the end of $w$ to determine $I_G(w)$. First, write 
$w = v D^{\lceil n/2 \rceil - 1} E$. The reader may check that $I_G(D^{\lceil n/2 \rceil - 1} E) = [2^{\lceil n/2 \rceil - 2}, 2^{\lceil n/2 \rceil - 1})$, which we write as $[a,b)$ for brevity. Write $v = v_{\lfloor n/2 \rfloor} \ldots v_2 v_1$. Then, we will work from the right and state how each $v_i$ will alter this interval. For instance, if $v_1 = D$, then $I_G(v_1 D^{\lceil n/2 \rceil - 1} E)$ is the set of all vertices which lead to a vertex in $[a,b)$ via an edge labeled $D$, or $[2a, 2b)$. Similarly, if $v_1 = U$, then $I_G(v_1 D^{\lceil n/2 \rceil - 1} E)$ is the set of all vertices which lead to a vertex in $[a,b)$ via an edge labeled $U$, or $[a-1, b-1)$. In fact, it is simple to see in the same way that each $v_i$ will either double the endpoints of the interval (if $v_i = D$) or subtract one from the endpoints of the interval (if $v_i = U$). Since $a = 2^{\lceil n/2 \rceil - 2} > \lfloor n/2 \rfloor$ (for $n > 6$), clearly neither endpoint will ever go below $0$ in this procedure. 

This allows us to give a closed form for $I_G(w)$. Since $U$ does not change the length of the interval and $D$ doubles it, clearly the length of $I_G(w)$ is $2^j (b-a)$, where $j$ is the number of $D$ symbols in $v$. The left endpoint of $I_G(w)$, call it $c$, is obtained from $a$ via a sequence of either doubling or subtracting $1$, determined by whether the letters $v_1$, $v_2$, etc. are $D$ or $U$ respectively. Since no two consecutive $v_i$ can be $U$, two subtractions in a row are not permitted. For instance, if $a = 16$ and $v = UDDDUD$, then $c = (((((16 \cdot 2) - 1) \cdot 2) \cdot 2) \cdot 2) - 1 = 247$. It is not hard to check that this final answer could also be written as $c = 2^j a - (2^{n_1} + 2^{n_2} + \ldots + 2^{n_k})$, where $j$ is again the number of $D$ symbols in $v$, $k$ is the number of $U$ symbols in $v$, and $n_i$ is the number of $D$ symbols preceding the $i$th $U$ symbol in $v$. (Note that since $v$ does not contain consecutive $U$ symbols, $\{n_i\}$ is strictly decreasing). For instance, for $v = UDDDUD$, there are three $D$ symbols preceding the rightmost 
$U$ and zero $D$ symbols preceding the leftmost $U$, and so $n_1 = 3$ and $n_2 = 0$, yielding $c = 2^4 a - (2^{n_1} + 2^{n_2}) = 2^4 \cdot 16 - 8 - 1 = 247$. This yields the closed form
\[
I_G(w) = \left[2^j a - \sum_{i=1}^k 2^{n_i}, 2^j b - \sum_{i=1}^k 2^{n_i}\right).
\]
Then, if $w$ and $w'$ have different numbers of $D$ symbols, then the lengths of $I_G(w)$ and $I_G(w')$ are different, clearly implying that $I_G(w) \neq I_G(w')$. If $w$ and $w'$ have the same number of $D$ symbols, then the choices of $n_i$ for $w$ and $w'$ are distinct (since the $n_i$ uniquely determine $v$), and by uniqueness of binary representation, the sum $\sum_{i=1}^k 2^{n_i}$ would take different values for $w$ and $w'$, again implying that $I_G(w) \neq I_G(w')$. Therefore, all words in $S$ have distinct predecessor sets in $X$, and we are done.

\end{proof}

We note that since predecessor sets are just projections of extender sets, this example clearly has $|E_X(n)| \geq 2^{\lfloor n/4\rfloor }$ for every $n > 6$ as well, illustrating that in general, the number of extender sets of words of length $n$ may be much greater than the number of follower sets of words of length $n$.

\section{Main Results}
\label{proofs}

We begin with some simple facts about follower sets, which will repeatedly be useful in our analysis. The proofs are simple and left to the reader.

\begin{lemma}\label{unionslem}
For any subshift $X$, any $w \in L_n(X)$, and any $m \in \mathbb{N}$, $F_X(w) = \bigcup_{v} F_X(vw)$, where the union is taken over those $v \in L_m(X)$ for which $vw \in L_{m+n}(X)$. 
\end{lemma}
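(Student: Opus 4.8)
The plan is to establish the set equality by proving the two inclusions $F_X(w) \supseteq \bigcup_v F_X(vw)$ and $F_X(w) \subseteq \bigcup_v F_X(vw)$ separately, working directly from the definition of the follower set as the collection of right-infinite sequences $s$ for which $ws$ occurs in some point of $X$.

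For the inclusion $\supseteq$, I would fix any $v \in L_m(X)$ with $vw \in L_{m+n}(X)$ and take $s \in F_X(vw)$. By definition the right-infinite word $vws$ occurs in some point $x \in X$. Since $ws$ is a suffix of $vws$ (it is $vws$ with the initial length-$m$ block $v$ removed), $ws$ also occurs in $x$, so $s \in F_X(w)$. As this holds for every admissible $v$, the union is contained in $F_X(w)$.

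For the inclusion $\subseteq$, I would take $s \in F_X(w)$, so that $ws$ occurs in some point $x \in X$; say $w$ occupies coordinates $[i, i+n)$ of $x$. The key observation is that points of $X$ are elements of $A^{\mathbb{Z}}$, hence bi-infinite, so the coordinates $[i-m, i)$ of $x$ are defined and spell out a word $v \in L_m(X)$. Then $vws$ occurs in $x$, which shows both that $vw \in L_{m+n}(X)$ (so $v$ is one of the words indexing the union) and that $s \in F_X(vw)$. Hence $s$ lies in the union.

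The two inclusions together give the claimed equality. I do not expect a genuine obstacle here; the only point requiring a moment's care is the $\subseteq$ direction, where one must invoke the two-sided nature of points in $A^{\mathbb{Z}}$ to guarantee that a length-$m$ word always sits immediately to the left of any occurrence of $w$. This is precisely what makes the union range over a nonempty set of extensions and what forces every element of $F_X(w)$ to arise from some $vw$.
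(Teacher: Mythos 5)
Your proof is correct and is exactly the routine two-inclusion argument the paper has in mind when it leaves this lemma to the reader; the key point you isolate (bi-infiniteness of points in $A^{\mathbb{Z}}$ guaranteeing a length-$m$ word to the left of any occurrence of $w$) is the only substantive step. Nothing to change.
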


\begin{lemma}\label{superset}
For any subshift $X$, any $w \in L_n(X)$, and any $m<n$, 
there exists a $v \in L_m(X)$ for which $F(v) \supseteq F(w)$.
\end{lemma}

\begin{lemma}\label{lengthen}
Let $X$ be a subshift. If for two words $w, u \in L(X)$, $F_X(w) = F_X(u)$, then for any $v \in L(X)$, $F_X(wv) = F_X(uv)$.
\end{lemma}

%\begin{proof}
%Suppose $F_X(w) = F_X(u)$. Let $s$ be a sequence which may follow $wv$ in $X$, that is, $s \in F_X(wv)$. Then $vs \in F_X(w)$. Since $F_X(w) = F_X(u)$, $vs \in F_X(u)$, so $s \in F_X(uv)$, and thus $F_X(wv) \subseteq  F_X(uv)$. By an identical argument, $F_X(uv) \subseteq F_X(wv)$, and so $F_X(wv) = F_X(uv)$.
%\end{proof}

The following will be our main tool for proving soficity of a subshift via the sets $F_X(n)$.

\begin{theorem}\label{unions}
For any subshift $X$, if there exists $n \in \mathbb{N}$ such that $\displaystyle F_X(n) \subseteq \bigcup_{\ell \leq n - 1} F_X(\ell)$, then $X$ is sofic. 
\end{theorem}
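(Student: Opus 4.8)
The plan is to use the stated equivalence that $X$ is sofic if and only if it has only finitely many distinct follower sets. Since the alphabet is finite there are only finitely many words of length at most $n-1$, so it suffices to prove the following claim: for every $w \in L(X)$ there is a word $v$ with $|v| \leq n-1$ and $F_X(v) = F_X(w)$. The hypothesis $F_X(n) \subseteq \bigcup_{\ell \leq n-1} F_X(\ell)$ is exactly the statement that this claim holds for words of length precisely $n$, and it is trivial for words of length at most $n-1$; the real content is to propagate it to all longer words.

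I would carry this out by strong induction on $|w|$, with the base cases $|w| \leq n$ handled as above. For the inductive step, suppose $|w| > n$ and that the claim holds for every shorter word. Write $w = w'a$ with $a \in A$ a single letter, so $|w'| = |w|-1 \geq n$. Applying the inductive hypothesis to $w'$ gives a word $u$ with $|u| \leq n-1$ and $F_X(u) = F_X(w')$. This is where Lemma~\ref{lengthen} does the essential work: appending $a$ preserves the equality of follower sets, so $F_X(w) = F_X(w'a) = F_X(ua)$. (One checks that $ua \in L(X)$: since $w'a \in L(X)$, some right-infinite sequence beginning with $a$ lies in $F_X(w') = F_X(u)$, whence $ua$ is legal.) Now $|ua| = |u|+1 \leq n \leq |w|-1 < |w|$, so $ua$ is strictly shorter than $w$ and the inductive hypothesis applies a second time, producing $v$ with $|v| \leq n-1$ and $F_X(v) = F_X(ua) = F_X(w)$. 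This closes the induction.

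Once the claim is proved, every follower set $F_X(w)$ coincides with $F_X(v)$ for some $v$ of length at most $n-1$, so the collection of all distinct follower sets of $X$ has cardinality at most the finite number of words of length at most $n-1$. By the sofic characterization, $X$ is sofic.

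The step I expect to require the most care is the inductive step, and specifically the \emph{append-then-reshorten} maneuver: after Lemma~\ref{lengthen} reattaches the letter $a$, the new word $ua$ can again have length up to $n$, so one cannot stop there but must reinvoke the inductive hypothesis. The only genuinely delicate points are the length bookkeeping — verifying $|ua| < |w|$ so that strong induction legitimately applies — and confirming $ua \in L(X)$ so that all the follower-set identities are meaningful; everything else follows immediately from finiteness of the alphabet and the equivalence between soficity and finiteness of the set of follower sets.
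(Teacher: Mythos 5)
Your proof is correct and follows essentially the same strategy as the paper's: use the hypothesis to shorten words and Lemma~\ref{lengthen} to propagate equality of follower sets under right-extension, concluding that every follower set is realized by a word of length at most $n-1$. The only difference is organizational --- the paper repeatedly replaces the length-$n$ prefix of a long word and iterates, while you run a strong induction peeling off the last letter --- and your attention to checking $ua \in L(X)$ is a nice touch the paper leaves implicit.
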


\begin{proof}
If there exists $n \in \mathbb{N}$ such that $\displaystyle F_X(n) \subseteq \bigcup_{\ell \leq n - 1} F_X(\ell)$, then for any word $w \in L_n(X)$, the follower set $F_X(w)$ is also the follower set of a strictly shorter word, so $w$ is shortenable to a word of length strictly less than $n$. Now, let $v \in L(X)$ of length greater than $n$, say $v = v_1v_2...v_nv_{n+1}...v_k$ where $k > n$. Then $v_1v_2...v_n \in L_n(X)$, and so is shortenable to some word $v' \in L(X)$ of length less than $n$. But $F_X(v_1v_2...v_n) = F_X(v')$ implies $F_X(v_1v_2...v_nv_{n+1}...v_k) = F_X(v'v_{n+1}...v_k)$ by Lemma~\ref{lengthen}, so $v$ is shortenable to a word $v'v_{n+1}...v_k$. If $v'v_{n+1}...v_k$ has length less than $n$, stop, but if $v'v_{n+1}...v_k$ has length at least $n$, we may apply the above process again and shorten repeatedly, getting shorter and shorter words with the same follower set until we find one with length less than $n$. So $v$ is shortenable to a word of length less than $n$. But this means that $\bigcup_{\ell \leq n-1} F_X(\ell)$ contains all follower sets in $X$, so $X$ has only finitely many follower sets, and thus, $X$ is sofic.
\end{proof}

We can now show that $|F_X(n)| = 1$ for any $n$ always implies soficity of $X$.

\begin{theorem}\label{n=1}
For any subshift $X$, if there exists $n$ for which $|F_X(n)| = 1$, then $X$ is a full shift.
\end{theorem}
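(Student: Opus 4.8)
The plan is to show directly that the unique follower set shared by all words of length $n$ is the set of \emph{all} right-infinite sequences, which immediately forces $X$ to be the full shift. Throughout I take the alphabet $A$ to consist of exactly the letters that appear in $X$, so that ``full shift'' means $X = A^{\mathbb{Z}}$. Write $F$ for the common value of $F_X(w)$ over $w \in L_n(X)$; note that $F$ is nonempty (as $X \neq \varnothing$) and closed in the product topology, the latter being a standard consequence of compactness of $X$.

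The key step is a \emph{prepending} claim: for every letter $a \in A$ and every $s \in F$, the sequence $as$ again lies in $F$. To see this, note that since $a$ appears in $X$ there is a word $v = v'a \in L_n(X)$ ending in $a$, with $v'$ of length $n-1$ (possibly empty when $n=1$). Because $F_X(v) = F$ contains $s$, the sequence $vs = v'(as)$ occurs in $X$, so $as \in F_X(v')$. It then remains to identify $F_X(v')$ with $F$: by Lemma~\ref{unionslem} with $m = 1$, $F_X(v') = \bigcup_b F_X(bv')$ over those letters $b$ with $bv' \in L_n(X)$, and each such term equals $F$ since $bv'$ has length $n$; the union is nonempty because $v'$ extends to the left inside any point containing it. Hence $F_X(v') = F$ and $as \in F$.

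Iterating this claim gives $ts \in F$ for every finite word $t$ over $A$ and every $s \in F$. Fixing one $s_0 \in F$ and an arbitrary right-infinite sequence $r = r_1 r_2 \cdots \in A^{\mathbb{N}}$, the sequences $r_1 \cdots r_k s_0$ lie in $F$ and converge to $r$ as $k \to \infty$; since $F$ is closed, $r \in F$. Thus $F = A^{\mathbb{N}}$. To finish, pick any $w_0 \in L_n(X)$; for an arbitrary finite word $z$ over $A$, the right-infinite sequence obtained by appending a fixed letter infinitely to $z$ lies in $A^{\mathbb{N}} = F = F_X(w_0)$, so $w_0 z$ occurs in $X$, and therefore its subword $z$ lies in $L(X)$. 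As $z$ was arbitrary, $L(X) = A^{*}$ and $X = A^{\mathbb{Z}}$.

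I expect the main obstacle to be the prepending claim, and specifically the step that upgrades the length-$(n-1)$ follower set $F_X(v')$ back to $F$. The naive attempt to argue ``$a$ can follow $w$ and $s$ can follow $w$, hence $as$ can follow $w$'' fails because of local consistency of points; the correct route instead slides the window so that $a$ is absorbed as the \emph{last} letter of a length-$n$ word and then recovers $F$ via Lemma~\ref{unionslem}. I also want to check that the argument degenerates gracefully when $n = 1$, where $v'$ is the empty word and $F_X(v')$ is the set of all right-infinite sequences occurring in $X$, which the same application of Lemma~\ref{unionslem} still identifies with $F$.
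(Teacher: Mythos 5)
Your proof is correct, but it takes a genuinely different route from the paper's. The paper argues by contrapositive: assuming $X$ is not the full shift, it picks a forbidden word $w_1\cdots w_k$ of \emph{minimal} length $k \ge 2$, so that $w_1$ and $w_2\cdots w_k$ are both legal; then for every $n$ it exhibits an $n$-letter word $v$ ending in $w_1$ (which cannot be followed by any ray beginning $w_2\cdots w_k$) and an $n$-letter word $u$ that can be so followed, giving $F_X(u)\neq F_X(v)$ and hence $|F_X(n)|\ge 2$ for all $n$. That argument is purely combinatorial and needs no topology beyond the definitions. You instead argue directly that the unique follower set $F$ must be all of $A^{\mathbb{N}}$: your prepending claim (slide the window so the new letter is absorbed as the last symbol of a length-$n$ word, then recover $F_X(v')=F$ from Lemma~\ref{unionslem}) is sound, including the nonemptiness of the union and the degenerate case $n=1$ with $v'=\varnothing$, and the passage from ``$ts_0\in F$ for all finite $t$'' to $F=A^{\mathbb{N}}$ correctly invokes closedness of follower sets (a compactness fact the paper never needs). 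What your approach buys is an explicit identification of the follower set and the reusable observation that a single follower set at length $n$ forces a single follower set at every shorter length; what the paper's buys is brevity, avoidance of compactness, and the slightly stronger conclusion that a non-full shift has at least two follower sets at \emph{every} length. Both are valid proofs of the stated theorem.
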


\begin{proof}
We prove the contrapositive. Without loss of generality, assume that the alphabet $A$ of $X$ consists entirely of letters which actually appear in points of $X$, and assume that $X$ is not the full shift on $A$. Then there exists a word $w = w_1 w_2 \ldots w_k \in A^k$ which is not in the language of $X$; suppose that the length $k$ of $w$ is minimal. It must be the case that $k$ is at least $2$, since we assumed that all letters of $A$ are in $L(X)$. Then since we assumed $k$ to be minimal, $w_2 \ldots w_k \in L(X)$, so we can choose some one-sided infinite sequence $s$ appearing in $X$ which begins with $w_2 \ldots w_k$. Similarly, $w_1$ is in $L(X)$, so for any $n \in \mathbb{N}$, we may choose an $n$-letter word $v$ ending with $w_1$. Then $vs$ contains $w \notin L(X)$, so $s \notin F_X(v)$. However, since $s$ appears in $X$, there exists some $n$-letter word $u$ which can be followed by $s$ in $X$, and so $s \in F_X(u)$. Hence $F_X(u) \neq F_X(v)$, so $|F_X(n)| \geq 2$, and since $n$ was arbitrary, this is true for all $n$.
\end{proof}

We can now prove a version of Conjecture~\ref{mainconj} for unions of the sets $F_X(n)$, rather than the sets themselves.

\begin{theorem}\label{morsehedlund}
For any subshift $X$, if there exists $n \in \mathbb{N}$ so that $\displaystyle \left| \bigcup_{\ell \leq n} F_X(\ell) \right| \leq n$, then $X$ is sofic.
\end{theorem}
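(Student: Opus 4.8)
The plan is to combine Theorem~\ref{unions} with a pigeonhole argument applied to the nested unions $U_m := \bigcup_{\ell \le m} F_X(\ell)$. The crucial observation is that these form an increasing chain $U_1 \subseteq U_2 \subseteq \cdots \subseteq U_n$, and that whenever two consecutive terms coincide, say $U_{m-1} = U_m$ for some $m \le n$, we immediately have
\[
F_X(m) \subseteq U_m = U_{m-1} = \bigcup_{\ell \le m-1} F_X(\ell),
\]
which is exactly the hypothesis of Theorem~\ref{unions} (with its $n$ taken to be $m$). Thus the entire problem reduces to showing that, under the hypothesis $|U_n| \le n$, some consecutive pair of these unions must in fact coincide.

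To force such a coincidence I would examine the non-decreasing integer sequence $|U_1| \le |U_2| \le \cdots \le |U_n|$. If no two consecutive terms agreed, this sequence would be strictly increasing, giving $|U_n| \ge |U_1| + (n-1)$. Combined with the hypothesis $|U_n| \le n$, this forces $|U_1| \le 1$, i.e. $|F_X(1)| = 1$. So the only configuration that escapes a coincidence of consecutive unions is the one where there is a single follower set at length one. This residual case is handled directly by Theorem~\ref{n=1}: if $|F_X(1)| = 1$ then $X$ is a full shift, hence sofic. Therefore, after disposing of that case, we may assume $|U_1| = |F_X(1)| \ge 2$, the strictly increasing scenario is impossible, some $U_{m-1} = U_m$ occurs, and Theorem~\ref{unions} yields soficity.

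I expect the argument to be essentially routine once the nested-union viewpoint is adopted, since the genuine content — that a collapse of follower sets at a single length propagates to all longer words — is already packaged in Theorem~\ref{unions}. The one point requiring care is the base case: the naive counting only delivers $|U_n| \ge n$ rather than a strict inequality, so without separately invoking Theorem~\ref{n=1} to guarantee $|U_1| \ge 2$ the pigeonhole would be inconclusive. (If one instead reads $\bigcup_{\ell \le n} F_X(\ell)$ as including $\ell = 0$, then the chain $U_0 \subseteq \cdots \subseteq U_n$ has $n+1$ terms with $|U_0| = 1$, and a strictly increasing sequence would already give $|U_n| \ge n+1$; in that convention the base case disappears entirely.) Either way, the only real obstacle is this bookkeeping at the shortest lengths, and the heart of the proof is simply that an increasing chain of $n$ sets whose top has at most $n$ elements cannot have all of its successive inclusions be strict.
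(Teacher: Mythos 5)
Your argument is correct and is essentially the paper's proof read in the direct rather than contrapositive direction: the paper assumes $X$ is nonsofic, invokes Theorem~\ref{n=1} to get $|F_X(1)| \geq 2$ and Theorem~\ref{unions} to force each successive union to grow strictly, concluding $\bigl|\bigcup_{\ell \leq n} F_X(\ell)\bigr| \geq n+1$. Your pigeonhole on the nested chain $U_1 \subseteq \cdots \subseteq U_n$, together with the separate base case $|F_X(1)| = 1$, is the same argument with the same two ingredients.
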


\begin{proof}
We prove the contrapositive, and so assume that $X$ is nonsofic. By Theorem~\ref{n=1}, $|F_X(1)| \geq 2$. Then, by Theorem~\ref{unions}, for every $n > 1$, there exists $S \in F_X(n) \setminus \bigcup_{\ell < n} F_X(\ell)$, and so $\left|\bigcup_{\ell \leq n} F_X(\ell) \right| > 
\left|\bigcup_{\ell \leq n-1} F_X(\ell) \right|$. Therefore, by induction, for each $n$, 
$\left|\bigcup_{\ell \leq n} F_X(\ell) \right| \geq n+1$. 
\end{proof}

We may now prove the following, which establishes a logarithmic lower bound for the growth rate of $|F_X(n)|$ for nonsofic shifts. 

\begin{theorem}\label{log}
For any subshift $X$, if there exists $n \in \mathbb{N}$ such that $|F_X(n)| \leq \log_2(n+1)$, then $X$ is sofic.
\end{theorem}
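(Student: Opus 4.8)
The plan is to prove the contrapositive: assuming $X$ is nonsofic, I would establish the lower bound $|F_X(n)| > \log_2(n+1)$ for every $n$, or equivalently $|F_X(n)| \geq \lfloor \log_2(n) \rfloor + 2$ or some such clean threshold. The engine driving this is Theorem \ref{unions}, which for a nonsofic shift guarantees that for \emph{every} $n$ there is a follower set in $F_X(n)$ that does not appear among follower sets of any shorter word; in particular $F_X(n) \not\subseteq \bigcup_{\ell \leq n-1} F_X(\ell)$ for all $n$. The difficulty is that this only gives the \emph{union} bound of Theorem \ref{morsehedlund}, namely $|\bigcup_{\ell \leq n} F_X(\ell)| \geq n+1$, which says nothing directly about the individual quantity $|F_X(n)|$; a nonsofic shift could in principle have $|F_X(n)|$ small while the new follower sets accumulate slowly across levels. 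So the real work is to amplify "a new follower set appears at every length" into "many follower sets are present at each \emph{single} length $n$."

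The key idea I would use is a doubling/refinement argument exploiting Lemma \ref{unionslem}, which writes $F_X(w) = \bigcup_v F_X(vw)$ over one-letter (or $m$-letter) left-extensions $v$. Concretely, I would argue that if $|F_X(n)| = k$, then one can control $|F_X(m)|$ for $m$ up to roughly $2^k$: the point is that a follower set $F_X(w)$ of a length-$n$ word is determined, via repeated application of Lemma \ref{lengthen}, by the follower sets of its length-$n$ prefixes as one reads the word, and the number of distinct "states" reachable is governed by the subsets of $F_X(n)$ that can arise. I would formalize a transition structure on the set $F_X(n)$ (thinking of each follower set as a state, with transitions given by appending letters) and show that because follower sets are nested/related under the operations of Lemmas \ref{unionslem} and \ref{superset}, the number of genuinely new follower sets that can be produced by extending length-$n$ words is at most exponential in $|F_X(n)|$. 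Combined with the fact (from Theorem \ref{unions}) that a new follower set must appear at every length, this forces $2^{|F_X(n)|}$ to dominate $n$, giving $|F_X(n)| \geq \log_2 n$ up to additive constants.

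The main obstacle I anticipate is making the counting in the previous paragraph rigorous: I must pin down exactly what combinatorial object has cardinality bounded by a function of $|F_X(n)|$ and simultaneously bounds $n$ from above. The natural candidate is to fix the length $n$ where $|F_X(n)| = k$ is small and consider the "follower-set automaton" whose states are the elements of $\bigcup_{\ell \leq n} F_X(\ell)$; using Lemma \ref{lengthen}, appending a fixed letter induces a well-defined partial map on follower sets, so every follower set of a long word is obtained by running this automaton, and the number of reachable states is controlled. The delicate step is ruling out the possibility that the automaton produces new states indefinitely despite small $|F_X(n)|$ — this is precisely where I expect to invoke that follower sets at length $n$ are unions (via Lemma \ref{unionslem}) of follower sets at length $n+1$, so that the follower sets at level $n$ partition or refine those at higher levels in a way that caps the branching. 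If that refinement structure forces each new follower set to correspond to a distinct nonempty subset of the $k$-element set $F_X(n)$, then there are at most $2^k - 1$ possibilities, the chain of strict inclusions from Theorem \ref{morsehedlund} has length at most $2^k$, and we conclude $n+1 \leq 2^k$, i.e. $k \geq \log_2(n+1)$, as desired.
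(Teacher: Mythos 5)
Your concluding paragraph is, in essence, the paper's proof, and the step you leave as a conditional (``if that refinement structure forces each new follower set to correspond to a distinct nonempty subset of $F_X(n)$\dots'') is not a gap at all: it is immediate from Lemma~\ref{unionslem}. For any $w \in L_\ell(X)$ with $\ell \leq n$, applying that lemma with $m = n-\ell$ writes $F_X(w)$ as a union of sets $F_X(vw)$ with $vw \in L_n(X)$, so every element of $\bigcup_{\ell \leq n} F_X(\ell)$ is a nonempty union of elements of $F_X(n)$, of which there are at most $2^k - 1 \leq 2^{\log_2(n+1)} - 1 = n$; Theorem~\ref{morsehedlund} then finishes the argument exactly as you say. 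Where you go astray is in the first two paragraphs: the ``amplification'' you seek does not require tracking how follower sets evolve as words get \emph{longer} than $n$ (the follower-set automaton, controlling $|F_X(m)|$ for $m$ up to $2^k$, etc.); Lemma~\ref{unionslem} runs in the other direction, expressing follower sets of \emph{shorter} words as unions over left-extensions to length $n$, and that is all that is needed. So your proof is correct once you notice that the hedged step is a one-line application of a lemma you already cite; the automaton machinery is an unnecessary detour.
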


\begin{proof}
Suppose that for some $n$, $F_X(n) = \{F_1, F_2, ... , F_k\}$ where $k \leq \log_2(n+1)$. By Lemma~\ref{unionslem}, for each length $\ell < n$, every follower set of a word in $L_\ell (X)$ is a union of follower sets of words of length $n$. Therefore, every element of
$\bigcup_{\ell \leq n} F_X(\ell)$ is a non-empty union of elements of $F_X(n)$. There are at most $2^k - 1 \leq 2^{\log_2(n+1)} - 1 = n$ such unions, so $\left|\bigcup_{\ell \leq n} F_X(\ell) \right| \leq n$, which implies that $X$ is sofic by Theorem~\ref{morsehedlund}.
\end{proof}

Our next result shows that under the additional assumption that some non-empty word $w$ has the same follower set as the empty word, Conjecture~\ref{mainconj} is true.

\begin{lemma}\label{emptyword}
For any subshift $X$, if there exists a non-empty word $w \in L(X)$ such that $F_X(w) = F_X(\varnothing)$ and $n \in \mathbb{N}$ such that $|F_X(n)| \leq n$, then $X$ is sofic.
\end{lemma}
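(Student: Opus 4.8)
<br>

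The plan is to exploit the hypothesis $F_X(w) = F_X(\varnothing)$ to show that the follower set of the empty word is ``absorbing'' in a strong sense, and then leverage this to bound $\bigl|\bigcup_{\ell \leq n} F_X(\ell)\bigr|$ so that Theorem~\ref{morsehedlund} applies. The first observation I would make is that $F_X(\varnothing)$ is always the largest follower set, since by Lemma~\ref{superset} (with $m = 0$) every $F_X(v)$ satisfies $F_X(\varnothing) \supseteq F_X(v)$. So the assumption $F_X(w) = F_X(\varnothing)$ says that some genuine word $w$ already achieves this maximal follower set.

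The key step is to use Lemma~\ref{lengthen} to propagate this equality. Since $F_X(w) = F_X(\varnothing)$, for any word $v \in L(X)$ we have $F_X(wv) = F_X(v)$; more generally $F_X(w^k v) = F_X(v)$ for all $k \geq 1$. The point I would try to extract is that prepending copies of $w$ never changes a follower set, which means that the follower set of any sufficiently long word agrees with that of a shorter word having $w$ ``stripped off'' the front. Concretely, I would aim to argue that every follower set in $X$ already appears as $F_X(u)$ for some word $u$ of length less than $|w|$: given any long word, repeatedly use the equality to remove a leading block and shorten it. If this works, then $X$ has only finitely many follower sets and is automatically sofic, making the $|F_X(n)| \leq n$ hypothesis unnecessary. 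I suspect the actual statement is more subtle than that, so the realistic version of the argument is that the hypothesis $F_X(w) = F_X(\varnothing)$ forces enough collapsing that Theorem~\ref{unions} can be triggered at some controlled length.

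The cleaner route, and the one I would pursue first, is to show directly that the hypotheses force $F_X(n) \subseteq \bigcup_{\ell \leq n-1} F_X(\ell)$ for the given $n$ (or for some nearby length), so that Theorem~\ref{unions} yields soficity immediately. To do this I would take an arbitrary $u \in L_n(X)$ and try to produce a strictly shorter word with the same follower set, using the presence of $w$: since $F_X(w) = F_X(\varnothing)$, the word $w$ behaves like a reset, and I would look for a place inside or adjacent to $u$ where this reset lets me delete letters without altering $F_X(u)$. The combinatorial bookkeeping that $|F_X(n)| \leq n$ provides is what I expect controls how far one must search before such a reset is guaranteed to occur, presumably via a pigeonhole argument on the $n$ follower sets among words of length up to $n$.

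The main obstacle I anticipate is precisely the interaction between the two hypotheses: the equality $F_X(w) = F_X(\varnothing)$ is a statement about prepending $w$, whereas follower sets are naturally controlled by what can \emph{follow} a word, and Lemma~\ref{lengthen} only lets me append on the right while preserving equality. Reconciling these two directions --- turning a left-absorption property into a right-side shortening statement that feeds Theorem~\ref{unions} --- is the crux. I expect the proof to hinge on a careful argument showing that once $F_X(\varnothing)$ is realized by a nonempty word, the number of distinct follower sets realizable by words of each length stabilizes quickly, so the bound $|F_X(n)| \leq n$ combined with Theorem~\ref{morsehedlund} closes the gap.
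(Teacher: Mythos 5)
You have assembled the right ingredients --- the maximality of $F_X(\varnothing)$, the identity $F_X(wv) = F_X(v)$ from Lemma~\ref{lengthen}, and Theorem~\ref{morsehedlund} as the target --- but the plan as written has a genuine gap: you read the identity $F_X(wv)=F_X(v)$ in the wrong direction. You treat it as a license to \emph{shorten} an arbitrary word of length $n$ by ``stripping $w$ off the front,'' which only applies to words that actually begin with $w$; for a generic $u \in L_n(X)$ there is no reset to find, and the pigeonhole you gesture at (``on the $n$ follower sets among words of length up to $n$'') is circular, since bounding the number of follower sets of words of length up to $n$ by $n$ is precisely the statement you need to establish before Theorem~\ref{morsehedlund} can fire. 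Your own suspicion that the naive stripping argument would make the hypothesis $|F_X(n)|\leq n$ unnecessary is correct, and is the sign that this route cannot close.

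The missing idea is to use the identity to \emph{lengthen} short words rather than shorten long ones. First reduce $w$ to a single letter: by Lemma~\ref{superset} some letter $a$ has $F_X(a) \supseteq F_X(w) = F_X(\varnothing)$, and since $F_X(\varnothing)$ is maximal this forces $F_X(a)=F_X(\varnothing)$. Then Lemma~\ref{lengthen} gives $F_X(v)=F_X(av)$ for every $v\in L(X)$, which says that every follower set realized at length $\ell$ is also realized at length $\ell+1$, i.e.\ $F_X(1)\subseteq F_X(2)\subseteq\cdots$ as collections of follower sets. Hence $\bigcup_{\ell\leq n}F_X(\ell)=F_X(n)$, so $|F_X(n)|\leq n$ immediately yields $\bigl|\bigcup_{\ell\leq n}F_X(\ell)\bigr|\leq n$ and Theorem~\ref{morsehedlund} applies. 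No stabilization estimate or pigeonhole is needed; the whole content is the monotone inclusion, which your sketch never states or proves.
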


\begin{proof}
The follower set of the empty word is the set of all right-infinite sequences appearing in any point of $X$. If there exists a word $w$ such that any legal right-infinite sequence may appear after $w$, then by Lemma~\ref{superset}, there is a letter with this property as well. So we may assume that $F_X(a) = F_X(\varnothing)$ where $a$ is a single letter. 

The fact that $F_X(\varnothing) = F_X(a)$ implies by Lemma~\ref{lengthen} that for every $w \in L(X)$, 
$F_X(w) = F_X(aw) = F_X(aaw) = \ldots$. Therefore, every follower set of a word of length $\ell$ is also a follower set of a word of any length greater than $\ell$. In other words, $F_X(1) \subseteq F_X(2) \subseteq F_X(3) \subseteq \ldots$. %implying that $|F_X(1)| \leq |F_X(2)| \leq |F_X(3)| \leq \ldots$. 
Then, for every $n$, $F_X(n) = \bigcup_{\ell \leq n} F_X(\ell)$, and so if $|F_X(n)| \leq n$ for some $n$, clearly $\left| \bigcup_{\ell \leq n} F_X(\ell) \right| \leq n$, implying that $X$ is sofic by Theorem~\ref{morsehedlund}.
\end{proof} 

\begin{theorem}\label{n=2}
For any subshift $X$, if there exists $n \geq 2$ for which $|F_X(n)| \leq 2$, then $X$ is sofic.
\end{theorem}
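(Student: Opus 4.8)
The plan is to first dispose of the case $|F_X(n)|=1$, and then analyze the case $|F_X(n)|=2$, with the value $n=2$ requiring separate and more careful treatment. By Theorem~\ref{n=1}, if $|F_X(n)|=1$ then $X$ is a full shift and hence sofic, so I may assume $F_X(n)=\{F_1,F_2\}$ with $F_1\neq F_2$. The key preliminary observation is that, by Lemma~\ref{unionslem} (prepending words of length $n-\ell$), every follower set of a word of length $\ell<n$ is a nonempty union of elements of $\{F_1,F_2\}$, and therefore lies in $\{F_1,F_2,F_1\cup F_2\}$. Consequently $\bigcup_{\ell\leq n}F_X(\ell)\subseteq\{F_1,F_2,F_1\cup F_2\}$ has at most three elements.

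For $n\geq 3$ this already finishes the argument: $\left|\bigcup_{\ell\leq n}F_X(\ell)\right|\leq 3\leq n$, so Theorem~\ref{morsehedlund} gives soficity directly. The only delicate case is $n=2$, where this union may have exactly three elements and Theorem~\ref{morsehedlund} does not immediately apply.

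For $n=2$ I would split on whether $F_1$ and $F_2$ are comparable. If one contains the other, then $F_1\cup F_2\in\{F_1,F_2\}$, so $\left|\bigcup_{\ell\leq 2}F_X(\ell)\right|\leq 2$ and Theorem~\ref{morsehedlund} again applies. If $F_1$ and $F_2$ are incomparable, I would first show $F_X(\varnothing)=F_1\cup F_2$: it contains both $F_1$ and $F_2$ because $F_X(\varnothing)$ contains every follower set, and it is contained in $F_1\cup F_2$ because $F_X(\varnothing)=\bigcup_{a\in L_1(X)}F_X(a)$ with each $F_X(a)\subseteq F_1\cup F_2$. Now there are two possibilities. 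If some letter $a$ satisfies $F_X(a)=F_1\cup F_2=F_X(\varnothing)$, then $a$ is a non-empty word with $F_X(a)=F_X(\varnothing)$, and Lemma~\ref{emptyword} (using $|F_X(2)|\leq 2$) yields soficity. Otherwise every letter has follower set $F_1$ or $F_2$; since these union to $F_X(\varnothing)=F_1\cup F_2$, both values must occur, so $F_X(2)=\{F_1,F_2\}\subseteq F_X(1)\subseteq\bigcup_{\ell\leq 1}F_X(\ell)$ and Theorem~\ref{unions} applies.

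The main obstacle is precisely the incomparable $n=2$ case: there the bound of $3$ on $\left|\bigcup_{\ell\leq n}F_X(\ell)\right|$ strictly exceeds $n$, so the Morse--Hedlund-style counting of Theorem~\ref{morsehedlund} is unavailable. The resolution rests on the observation that the extra third set is forced to equal $F_X(\varnothing)$, which lets me play the hypothesis of Lemma~\ref{emptyword} (some letter realizes $F_X(\varnothing)$) against its negation (both $F_1$ and $F_2$ already appear among the single-letter follower sets, triggering Theorem~\ref{unions}). Verifying the identity $F_X(\varnothing)=F_1\cup F_2$ and the dichotomy on single-letter follower sets are the only steps needing genuine care; everything else reduces to the already-established Theorems~\ref{unions} and~\ref{morsehedlund}.
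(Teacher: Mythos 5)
Your proof is correct. It uses the same toolkit as the paper (Lemma~\ref{unionslem} to see that every follower set of a shorter word lies in $\{F_1,F_2,F_1\cup F_2\}$, then Lemma~\ref{emptyword} and Theorem~\ref{unions} to close out the hard case), but it is organized differently, and the reorganization is a genuine, if modest, improvement in clarity. The paper runs a single trichotomy on $F_X(1)$ uniformly for all $n\geq 2$: either $|F_X(1)|=1$ (Theorem~\ref{n=1}), or $F_1\cup F_2\in F_X(1)$ (Lemma~\ref{emptyword}, since $F(\varnothing)=F_1\cup F_2$), or $F_X(1)=\{F_1,F_2\}=F_X(n)$ (Theorem~\ref{unions}). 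You instead front-load the counting result, Theorem~\ref{morsehedlund}: since $\bigl|\bigcup_{\ell\leq n}F_X(\ell)\bigr|\leq 3$, the cases $n\geq 3$ and the case where $F_1,F_2$ are comparable are dispatched immediately, isolating the single configuration where counting fails ($n=2$ with $F_1,F_2$ incomparable). There you resolve matters by exactly the paper's dichotomy: if some letter realizes $F_1\cup F_2=F_X(\varnothing)$, invoke Lemma~\ref{emptyword}; otherwise incomparability forces both $F_1$ and $F_2$ to appear as follower sets of single letters, so $F_X(2)\subseteq F_X(1)$ and Theorem~\ref{unions} applies. Your version makes visible precisely where the conjecture's difficulty lives (the union bound exceeding $n$), at the cost of a case split on $n$ that the paper's uniform argument avoids; the paper's version is shorter but slightly obscures why $n=2$ is the only delicate value. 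Both are complete and correct.
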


\begin{proof}
The case where $|F_X(n)| = 1$ is treated by Theorem~\ref{n=1}, so we choose any $n \geq 2$ and suppose that there are exactly $2$ follower sets in $X$ of words of length $n$, say $F_1$ and $F_2$. We consider the sets in $F_X(1)$. By Lemma~\ref{unionslem}, every element of $F_X(1)$ is either $F_1$, $F_2$, or $F_1 \cup F_2$. If $|F_X(1)| = 1$, $X$ is sofic by Theorem~\ref{n=1}, so assume that $|F_X(1)| \geq 2$, that is, at least two of the above sets must appear in $F_X(1)$. Note that $F(\varnothing) = \bigcup_{w \in L_X(n)} F(w) = F_1 \cup F_2$, so by Lemma~\ref{emptyword}, if $F_1 \cup F_2$ is an element of $F_X(1)$, then $X$ is sofic. The only remaining case is that $F_X(1) = \{F_1, F_2\} = F_X(n)$, and then $X$ is sofic by Theorem~\ref{unions}.
\end{proof}

We are now prepared to prove Conjecture~\ref{mainconj} for $n \leq 3$.
Our proof is much more complicated than the cases where $n=1,2$. 

\begin{theorem}\label{n=3}
Let $X$ be a subshift. If $|F_X(n)| \leq n$ for any $n \leq 3$, then $X$ is sofic.
\end{theorem}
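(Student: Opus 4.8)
The plan is to dispatch $n=1$ and $n=2$ by appeal to Theorems~\ref{n=1} and \ref{n=2}, and then treat the genuinely new case $|F_X(3)| = 3$. Throughout I argue by contradiction, assuming $X$ is nonsofic, and write $F_X(3) = \{F_1, F_2, F_3\}$ with the $F_i$ distinct. By Lemma~\ref{unionslem}, every follower set of a word of length at most $3$ is a union of members of $F_X(3)$; in particular $F_X(\varnothing) = F_1 \cup F_2 \cup F_3$. Two global constraints are available immediately: by Theorem~\ref{n=1}, nonsoficity gives $|F_X(1)| \geq 2$; and by Lemma~\ref{emptyword} no nonempty word can have follower set $F_X(\varnothing)$, so no single $F_i$ contains the other two, and $F_1 \cup F_2 \cup F_3$ is realized only by the empty word. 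Consequently the containment poset of $\{F_1,F_2,F_3\}$ is, up to relabeling, one of three shapes: a three-element antichain; a single relation $F_1 \subsetneq F_2$ with $F_3$ incomparable to both; or a ``V'' with $F_1 \subsetneq F_2$, $F_1 \subsetneq F_3$, and $F_2 \| F_3$.

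The engine of the argument is the follower-set transition. For a follower set $F$ and a letter $a$, set $F \cdot a = \{s : as \in F\}$; then $F_X(wa) = F_X(w) \cdot a$, the operation $\cdot a$ distributes over unions, and it is monotone in $F$. Because any word $u$ of length at least $3$ factors as its length-$3$ prefix followed by the rest, $F_X(u)$ is obtained from some $F_i$ by a sequence of transitions, while follower sets of shorter words are unions of the $F_i$. Hence, letting $\mathcal{C}$ be the collection of all unions of subsets of $\{F_1,F_2,F_3\}$ (so $|\mathcal{C}| \leq 7$), it suffices to prove that $\mathcal{C}$ is closed under every transition $\cdot a$: then $\mathcal{C}$ contains all follower sets of $X$, forcing $X$ sofic and contradicting the assumption. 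Since $\cdot a$ distributes over unions, closure of $\mathcal{C}$ is equivalent to the single statement that $F_i \cdot a \in \mathcal{C}$ for every $i$ and every letter $a$ that can follow $w_i$; equivalently, that each length-$4$ follower set $F_X(w_i a)$ is again a union of $F_1, F_2, F_3$.

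To establish this I would run a case analysis over the three poset shapes, using at each step the partial information that $F_i \cdot a = F_X(w_i a)$ is contained in the length-$3$ follower set $F_j$ of the suffix of $w_i a$, and that $(F_1 \cup F_2 \cup F_3)\cdot a = F_X(a) \in \mathcal{C}$ (again by Lemma~\ref{unionslem}), so that the union $(F_1 \cdot a) \cup (F_2 \cdot a) \cup (F_3 \cdot a)$ lies in $\mathcal{C}$. Combining these containments with the specific shape of the poset and with the realized members of $F_X(1)$ and $F_X(2)$, each case should either pin every $F_i \cdot a$ down to a member of $\mathcal{C}$, or else expose a configuration that directly triggers a finishing tool: an $n$ with $F_X(n) \subseteq \bigcup_{\ell \leq n-1} F_X(\ell)$ (Theorem~\ref{unions}), an $n$ with $\left|\bigcup_{\ell \leq n} F_X(\ell)\right| \leq n$ (Theorem~\ref{morsehedlund}), or a nonempty word sharing the follower set of $\varnothing$ (Lemma~\ref{emptyword}). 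In every branch the conclusion is soficity, contradicting the standing assumption.

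The main obstacle is exactly the control of the transitions $F_i \cdot a$. A priori $F_X(w_i a)$ can be an entirely new follower set, not expressible as a union of $F_1, F_2, F_3$, and nothing in Lemma~\ref{unionslem} rules this out for words longer than those whose follower sets we have counted; indeed this is precisely the phenomenon that separates follower sets from extender sets (cf.\ Example~\ref{dude}) and makes the general conjecture hard. The leverage that keeps the case analysis finite is the combination of monotonicity and union-distributivity of $\cdot a$ with the structural ban (from Lemma~\ref{emptyword}) on any $F_i$ dominating the other two, which sharply limits how the three ``atoms'' and their unions can be permuted by a letter. Carrying this bookkeeping through all subcases of the three posets is where the length and difficulty of the $n=3$ argument reside, and is why it is substantially more involved than the cases $n=1,2$.
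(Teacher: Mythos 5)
Your setup is sound: the reduction to the case $|F_X(3)|=3$, the exclusion of $F_1\cup F_2\cup F_3$ from every $F_X(i)$ via Lemma~\ref{emptyword}, the resulting classification of the containment poset of $\{F_1,F_2,F_3\}$, and the observation that it suffices to show the seven-element family $\mathcal{C}$ of nonempty unions of the $F_i$ is closed under the transition maps $F\mapsto F\cdot a$ (equivalently, that $F_X(4)\subseteq\mathcal{C}$) are all correct, and the transition-map framing is a clean reformulation. But the proof stops exactly where the theorem begins. The entire content of the $n=3$ case is the verification that, in every configuration not already killed by Theorems~\ref{n=1}, \ref{n=2}, \ref{unions} or Lemma~\ref{emptyword}, the transitions cannot escape $\mathcal{C}$ --- and you defer this with ``I would run a case analysis'' and ``each case \emph{should} either pin every $F_i\cdot a$ down \ldots or else expose a configuration that directly triggers a finishing tool.'' That claim is asserted, not demonstrated, and you yourself identify it as the crux (``a priori $F_X(w_ia)$ can be an entirely new follower set''). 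A strategy plus an accurate diagnosis of the obstacle is not a proof of the theorem.

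Moreover, it is not evident that the constraints you propose to feed into the case analysis --- monotonicity and union-distributivity of $\cdot a$, the containment $F_i\cdot a\subseteq F_j$ for the $3$-suffix, and $(F_1\cdot a)\cup(F_2\cdot a)\cup(F_3\cdot a)=F_X(a)\in\mathcal{C}$ --- are sufficient to close every branch. The paper's argument needs additional, more surgical devices: it first shows outright that \emph{no} containment $F_i\subseteq F_j$ can survive (so only your antichain shape remains), then organizes cases by how many of $F_1,F_2,F_3$ lie in $F_X(2)$, and in the hard cases it picks a specific word $abc$ with $F_X(abc)=F_i\notin F_X(2)$, deduces that $F_X(ab)$ must lie in $F_X(2)\setminus F_X(1)$ (since otherwise Lemma~\ref{lengthen} would place $F_i$ in $F_X(2)$), and then plays supersets in $F_X(2)$ against this word via Lemma~\ref{lengthen} to reach a contradiction; the last subcase further requires analyzing single-letter extensions of words with follower set $F_1$ and invoking Lemma~\ref{unionslem} in the form $F_X(i)=\bigcup_j F_X(ji)$. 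None of these moves is a formal consequence of the constraints you list, so the gap is not mere bookkeeping: the missing case analysis is where the theorem actually gets proved.
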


\begin{proof}
Clearly, for $n < 3$, Theorems~\ref{n=1} and \ref{n=2} imply this result. We can then restrict to the case where $n = 3$. If $|F_X(3)| < 3$, then $X$ is again sofic by either Theorem~\ref{n=1} or Theorem~\ref{n=2}. We therefore suppose that $|F_X(3)| = 3$, say $F_X(3) = \{F_1, F_2, F_3\}$. We also note that $F(\varnothing) = F_1 \cup F_2 \cup F_3$, and if any of $F_X(1)$, $F_X(2)$, or $F_X(3)$ contains $F_1 \cup F_2 \cup F_3$ as an element, then $X$ is sofic by Lemma~\ref{emptyword}. Therefore, in everything that follows, we assume that $F_1 \cup F_2 \cup F_3$ is not contained in 
$F_X(i)$ for $i \leq 3$.

We first show that if any $F_i$ is contained entirely within another, then $X$ is sofic. Suppose for a contradiction that some $F_i$ is contained in another, and so without loss of generality, we say that $F_2 \subseteq F_1$. By Lemma~\ref{unionslem}, all elements of $F_X(2)$ are nonempty unions of $F_1, F_2$, and $F_3$. However, $F_1 \cup F_3 = F_1 \cup F_2 \cup F_3$, and so $F_1 \cup F_3 = F_1 \cup F_2 \cup F_3$ are not in $F_X(2)$ as assumed above. Also, $F_1 \cup F_2 = F_1$. Therefore, the only possible elements of $F_X(2)$ are $F_1, F_2$, $F_3$, and $F_2\cup F_3$. If fewer than three of these four sets are part of $F_X(2)$, then $X$ is sofic by Theorem ~\ref{n=2}. Thus we may assume at least three of the four sets appear. If $F_1$, $F_2$, and $F_3$ are all in $F_X(2)$, then $F_X(3) \subseteq F_X(2)$, implying that $X$ is sofic by Theorem~\ref{unions}. Therefore, $F_2 \cup F_3 \in F_X(2)$. We note that by Lemma~\ref{superset}, some element of $F_X(2)$ must contain $F_1$. If $F_3$ contained $F_1$, then $F_3 = F_1 \cup F_2 \cup F_3$ is in $F_X(3)$, which we assumed not to be the case above. Similarly, $F_2 \cup F_3$ cannot contain $F_1$. Therefore, $F_1$ is the only set of $F_1$, $F_2$, $F_3$, and $F_2 \cup F_3$ to contain $F_1$, and so 
$F_1 \in F_X(2)$. Therefore $F_X(2)$ consists of $F_1$, $F_2\cup F_3$, and exactly one of $F_2$ and $F_3$. We note that if $F_2 \cup F_3$ is equal to any of $F_1$, $F_2$, or $F_3$, then either $|F_X(2)| = 2$ or $F_X(3) \subseteq F_X(2)$, in either case implying soficity by either Theorem~\ref{n=2} or Theorem~\ref{unions}. So from now on we assume $F_2\cup F_3$ is not equal to $F_1$, $F_2$, or $F_3$.

Now, let us consider $F_X(1)$. By Lemma~\ref{unionslem}, $F_X(1)$ can only consist of unions of sets in $F_X(2)$. 
The set $F_X(1)$ cannot contain $F_1 \cup F_3 = F_1 \cup F_2 \cup F_3$, and since $F_1 \cup F_2 = F_1$ 
we see that $F_X(1) \subseteq F_X(2)$. 
There exists some word $ab \in L_X(2)$ such that $F_X(ab) = F_2\cup F_3$. 
Clearly $F_X(a)$ is an element of $F_X(1)$ and therefore $F_X(a) = F_X(xy)$ for some $xy \in L_2(X)$. But then by Lemma~\ref{lengthen}, 
$F_X(xyb) =F_X(ab) = F_2 \cup F_3$, a contradiction since we above noted that $F_2 \cup F_3$ does not equal any of $F_1$, $F_2$, or $F_3$. %Any word with that follower set, when followed by $b$, must then have the follower set $F_2\cup F_3$. If $F_X(1) \subseteq F_X(2)$, though, then there exists some word $xy$ such that $F_X(xy) = F_X(a)$. Then certainly $xy$ may be followed by $b$ and the follower set of $xyb$ is $F_2\cup F_3$. Then $F_2\cup F_3 \in F_X(3)$, so $F_2\cup F_3$ is equal to one of $F_1, F_2,$ or $F_3$. The reader may check that any of these equalities will result in soficity of $X$ by previous lemmas. 
We have then shown that if any of the follower sets $F_1, F_2$, and $F_3$ are contained in one another, $X$ is sofic, and so for the rest of the proof assume that no such containments exist. Note that this also implies that if any of $F_1 \cup F_2$, $F_1 \cup F_3$, or $F_2 \cup F_3$ contain each other, then the containing set is $F_1 \cup F_2 \cup F_3$, which we have assumed is not in $F_X(1)$, $F_X(2)$, or $F_X(3)$. 

We break the remainder of the proof into cases by how many of the sets $F_1, F_2$, and $F_3$ are elements of $F_X(2)$. If all three of the sets are elements of $F_X(2)$, then $X$ is sofic by Theorem~\ref{unions}. We then have three remaining cases.\\

\textit{Case 1: none of $F_1$, $F_2$, $F_3$ are in $F_X(2)$.} By Lemma~\ref{unionslem}, $F_X(2)$ consists of nonempty unions of 
$F_1, F_2$, and $F_3$, and we have assumed that $F_1 \cup F_2 \cup F_3$ is not in $F_X(2)$. If $|F_X(2)| \leq 2$, then $X$ is sofic by 
Theorem~\ref{n=2}. The only possibility is then that $F_X(2) = \{ F_1\cup F_2, F_1\cup F_3, F_2\cup F_3\}$. Then by Lemma~\ref{superset}, 
$F_X(1)$ must contain supersets of each of these sets, and it cannot contain $F_1 \cup F_2 \cup F_3$. This forces $F_X(1)$ to also be 
$\{ F_1\cup F_2, F_1\cup F_3, F_2\cup F_3\}$, meaning that $F_X(2) = F_X(1)$, and so $X$ is sofic by Theorem~\ref{unions}.\\

\textit{Case 2: exactly one of $F_1$, $F_2$, $F_3$ is in $F_X(2)$.} Without loss of generality, suppose that $F_1 \in F_X(2)$ and $F_2, F_3 \notin F_X(2)$. %We assume $X$ is non-sofic and work toward a contradiction. (IS THIS ACTUALLY WHAT'S HAPPENING?) 
At least two other sets must be elements of $F_X(2)$ or else $X$ is sofic by Theorem~\ref{n=2}, and they must be unions of $F_1$, $F_2$, and $F_3$ by Lemma~\ref{unionslem}. Therefore, $F_X(2)$ contains at least two of the sets $F_1 \cup F_2$, $F_1 \cup F_3$, and $F_2 \cup F_3$. By Lemma~\ref{superset}, supersets of any such unions are also present in $F_X(1)$, which must be the sets themselves since we've assumed that $F_1 \cup F_2 \cup F_3 \notin F_X(1)$. If $F_1$ is also in $F_X(1)$, $F_X(2) \subseteq F_X(1)$, and $X$ would be sofic by Theorem~\ref{unions}, so $F_1 \notin F_X(1)$. 

Now, let $abc$ be some word such that $F_X(abc) = F_2$. What, then, is the follower set of $ab$? If it is any set in
$F_X(1)$, then there would exist $d$ so that $F_X(ab) = F_X(d)$, and then $F_X(abc)$ would equal $F_X(dc)$ by Lemma~\ref{lengthen}, meaning that $F_2 \in F_X(2)$, a contradiction. So the only choice for $F_X(ab)$ is $F_1$. Since at least two of $F_1 \cup F_2$, $F_1 \cup F_3$, and $F_2 \cup F_3$ are in $F_X(2)$, $F_X(2)$ contains a set of the form $F_1 \cup F_i$. Say that $F_X(xy) = F_1 \cup F_i$. Then, 
$F_X(xy) \supseteq F_X(ab)$, meaning that $F_X(xyc) \supseteq F_X(abc) = F_2$. Since none of the $F_i$ contain each other, this means that 
$F_X(xyc) = F_2$. But then since $F_1 \cup F_i$ also is a member of $F_X(1)$, there exists $z$ so that $F_X(z) = F_1 \cup F_i$, and then by Lemma~\ref{lengthen}, $F_X(zc) = F_2$, a contradiction since $F_2 \notin F_X(2)$. Hence, $X$ is sofic in this case as well.\\

\textit{Case 3: exactly two of $F_1$, $F_2$, $F_3$ are in $F_X(2)$.} Without loss of generality, suppose that $F_1, F_2 \in F_X(2)$ and 
$F_3 \notin F_X(2)$. %We again assume $X$ is not sofic (OR DO WE?). 
By Lemma~\ref{superset}, $F_X(2)$ must contain some superset of $F_3$ which is not $F_1 \cup F_2 \cup F_3$, so it is of the form $F_3 \cup F_i$ for $i=1$ or $2$. As in Case $2$, any of the sets $F_1 \cup F_2$, $F_1 \cup F_3$, or $F_2 \cup F_3$ which is an element of $F_X(2)$ must be in $F_X(1)$ as well. This means that if $F_1$ and $F_2$ are both in $F_X(1)$, then $F_X(2) \subseteq F_X(1)$ and $X$ would be sofic by Theorem~\ref{unions}, so we restrict to the case where at least one of these sets is not in $F_X(1)$.

Now, let $abc$ be some word such that $F_X(abc) = F_3$. As in Case $2$, the follower set of $ab$ must be some set which occurs in $F_X(2)$ but not $F_X(1)$, which must be either $F_1$ or $F_2$ (depending on which is not part of $F_X(1)$). Without loss of generality, we say that 
$F_X(ab) = F_2$. We now show that neither $F_1 \cup F_2$ nor $F_2 \cup F_3$ is in $F_X(2)$. Suppose for a contradiction that there is a word $xy \in L(X)$ for which $F_X(xy) = F_2 \cup F_i$, $i = 1$ or $3$. Then, since $F(xy) \supseteq F(ab) = F_2$, $F(xyc) \supseteq F(abc) = F_3$. Again, since no $F_i$ contains another, this implies that $F(xyc) = F_3$. Finally, we note that $F(y) \supseteq F(xy) = F_2 \cup F_i$, so 
$F(y) = F_2 \cup F_i$. Therefore, by Lemma~\ref{lengthen}, $F(yc) = F(xyc) = F_3$, but this is a contradiction since $F_3 \notin F_X(2)$.
We now know that neither $F_1 \cup F_2$ nor $F_2 \cup F_3$ is in $F_X(2)$. By Lemma~\ref{unionslem}, all sets in $F_X(2)$ are nonempty unions of $F_1$, $F_2$, and $F_3$, and if $|F_X(2)| < 3$, then $X$ is sofic by Theorem~\ref{n=2}. The only remaining case is then that $F_X(2) = \{F_1, F_2, F_1\cup F_3\}$. 

We now consider the sets in $F_X(1)$. Recall that $F_2 \notin F_X(1)$ and that $F_1\cup F_3 \in F_X(1)$ since $F_1 \cup F_3 \in F_X(2)$. If 
$|F_X(1)| = 1$, then $X$ is sofic by Theorem~\ref{n=1}, so we can assume that $F_X(1)$ contains at least one other set, which must be a nonempty union of the elements of $F_X(2)$ by Lemma~\ref{unionslem}. The only possibilities are $F_1$ and $F_1\cup F_3$, since we assumed earlier that $F_1 \cup F_2 \cup F_3 \notin F_X(2)$. %If $F_1\cup F_2 \notin F_X(1)$, we have $F_X(1) \subseteq F_X(2)$. By following the sets in $F_X(1)$ with single letters, we can create all the sets in $F_X(2)$, so if $F_X(1) \subseteq F_X(2)$, by following the sets in $F_X(2)$ with single letters, we can create all the sets in $F_X(2)$ within $F_X(3)$, so $F_X(2) \subseteq F_X(3)$, which is not the case. Therefore $F_1\cup F_2 \in F_X(1).$ \newline
Therefore, every set in $F_X(1)$ is a superset of $F_1$. 

Our final step will involve considering what happens when a word with follower set $F_1$ is extended on the right by a letter. Suppose for a contradiction that there exists a word $w \in L(X)$ with $F_X(w) = F_1$ and a letter $i$ for which $F_X(wi) = F_2$. Then, for any letter $j$, since $F_X(j) \in F_X(1)$, $F_X(j) \supseteq F_X(w) = F_1$. Therefore, $F_X(ji) \supseteq F_X(wi) = F_2$. However, the only superset of $F_2$ in $F_X(2)$ is $F_2$ itself, and so for every $j \in A$, $F_X(ji) = F_2$. Finally, note that, by Lemma ~\ref{unionslem}, $F_X(i) = \bigcup_j F_X(ji) = F_2$, a contradiction since $F_2 \notin F_X(1)$.

Similarly, let's assume for a contradiction that there exists a word $w \in L(X)$ with $F_X(w) = F_1$ and a letter $i$ for which $F_X(wi) = F_3$. Then, choose a letter $j$ with $F_X(j) = F_1 \cup F_3$. Then, since $F_X(j) \supseteq F_X(w) = F_1$, $F_X(ji) \supseteq F_X(wi) = F_3$. However, the only superset of $F_3$ in $F_X(2)$ is $F_1 \cup F_3$, so $F_X(ji) = F_1 \cup F_3$. Then, since $F_X(j) = F_X(ji) = F_1 \cup F_3$, by Lemma~\ref{lengthen}, 
$F_X(jii) = F_X(ji) = F_1 \cup F_3$, a contradiction since $F_1 \cup F_3 \notin F_X(3)$.

This means that for every word $w \in L(X)$ with $F_X(w) = F_1$ and any letter $a$ for which $wa \in L(X)$, $F(wa) = F_1$. But then, since the follower set of every letter contains $F_1$, the follower set of every legal $2$-letter word contains $F_1$, a contradiction since $F_X(2)$ contains $F_2$, and we assumed that none of the $F_i$ contains another. Every case has either led to a contradiction or to the conclusion that $X$ is sofic, and so we've proved that $X$ is sofic.

\end{proof}

Our final result is a version of Conjecture~\ref{mainconj} for a class of coded subshifts.
Recall the definition of coded subshifts below.

\begin{definition}
Given a set $\mathcal{W}$ of finite words, the \textbf{coded subshift} with \textbf{code words} $\mathcal{W}$ is the subshift generated by taking the closure of the set of all biinfinite sequences made from concatenating words in $\mathcal{W}$.
\end{definition}

\begin{theorem}\label{coded}
Given a sofic shift $X$, choose a subset $\mathcal{W} \subseteq L(X)$ with the property that for any finite word $v \in L(X)$, there exists some $w \in \mathcal{W}$ such that $v$ is a suffix of $w$. Create a coded subshift $Y$ with code words $\{wc \> | \> w \in \mathcal{W}\}$ where $c$ is a letter not appearing in the alphabet of $X$. Then if $|F_Y(n)| \leq n$ for any $n \in \mathbb{N}$, then $Y$ is sofic.
\end{theorem}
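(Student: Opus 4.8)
The plan is to analyze the follower sets of $Y$ by exploiting the fact that the separator letter $c$ occurs only at code-word boundaries, so that $c$ acts as a hard marker between freely concatenated code words. First I would establish a dichotomy for an arbitrary word $u \in L(Y)$. If $u$ contains at least one $c$, let $t$ be the (possibly empty) maximal suffix of $u$ containing no $c$; then $t$ is a prefix of the code word currently being read, and since everything to the left of the last $c$ is irrelevant to what may follow, $F_Y(u)$ depends only on $t$ and is governed by the completion set $\{z : tz \in \mathcal{W}\}$. Call such follower sets \emph{prefix-type}. If instead $u$ is $c$-free, then $u$ is a factor of a single code word and hence lies in $L(X)$, and $F_Y(u)$ is governed by the occurrences of $u$ inside code words, i.e. by $\{z : uz \text{ is a suffix of some } w \in \mathcal{W}\}$; call these \emph{factor-type}. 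In either case, once a code word is completed and a $c$ is read, the remaining behavior is the constant ``fresh start'' set $F_Y(c)$. Thus $F_Y(n)$ splits into the prefix-type sets coming from length-$n$ words containing a $c$ (with $|t| \le n-1$) together with the factor-type sets coming from the $c$-free words of $L_n(X)$.

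The second step is where both hypotheses enter. For a $c$-free word $u$, suffix-density gives $\{z : uz \text{ is a suffix of some } w \in \mathcal{W}\} = \{z : uz \in L(X)\}$: the forward inclusion holds because suffixes of code words lie in $L(X)$, and the reverse inclusion is exactly the assumption that every word of $L(X)$ is a suffix of some code word. Since $\{z : uz \in L(X)\}$ is determined by $F_X(u)$, each factor-type follower set is determined by $F_X(u)$, and as $X$ is sofic it has only finitely many follower sets. Hence there are only finitely many factor-type follower sets in all of $Y$, and consequently $Y$ is sofic if and only if there are only finitely many prefix-type follower sets.

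The third step controls the prefix-type sets by a Morse--Hedlund / quotient-automaton argument parallel to Theorem~\ref{unions}. Appending a non-$c$ letter $a$ to a word with maximal $c$-free suffix $t$ yields one with maximal $c$-free suffix $ta$, so by Lemma~\ref{lengthen} the map $F_Y(u) \mapsto F_Y(ua)$ is well defined on prefix-type sets; that is, the prefix-type follower sets are the states of a deterministic automaton whose transitions read letters into a code word. Letting $r(m)$ be the number of distinct prefix-type sets arising from prefixes of length at most $m$, the sequence $r$ is non-decreasing, and if $r(m) = r(m+1)$ for some $m$ then the reachable state set is closed under transitions and $r$ is eventually constant, giving finitely many prefix-type sets and hence soficity. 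Otherwise $r$ is strictly increasing, so $r(m) \ge m+1$ for all $m$. Since the prefix-type sets appearing in $F_Y(n)$ are exactly those with $|t| \le n-1$, we have $|F_Y(n)| \ge r(n-1)$, and the hypothesis $|F_Y(n_0)| \le n_0$ forces $r(n_0-1) \le n_0$.

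The hard part is the resulting off-by-one: $r(n_0-1) \le n_0 = (n_0-1)+1$ is consistent with the non-stabilizing case $r(m)=m+1$, so to reach a contradiction I must exhibit one more follower set at length $n_0$ beyond the persistent prefix-type family. Equivalently, I must rule out the extremal configuration in which the prefix-type quotients increase by exactly one at every length while every factor-type set at length $n_0$ already coincides with a prefix-type set. The leverage is again suffix-density: realizing all of $L(X)$ as suffixes of code words should force enough branching among code words that two genuinely new prefix-type quotients appear at a common length (making $r$ jump by at least $2$), or else force a factor-type set whose rich, prefix-closed extension set $\{z : uz \in L(X)\}$ cannot match the thin completion set $\{z : tz \in \mathcal{W}\}$ of any prefix-type set, supplying the missing follower set. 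Reconciling these two kinds of sets is the crux; once it yields $r(m) \le m$ for some $m$, the automaton stabilizes and $Y$ is sofic by the equivalence between soficity and finiteness of the collection of follower sets.
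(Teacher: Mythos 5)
Your first two steps track the paper's proof closely: the reduction $F_Y(ucv)=F_Y(cv)$ (everything left of the last $c$ is irrelevant) and the finiteness of the ``factor-type'' family, proved by showing that $F_X(w)=F_X(v)$ forces $F_Y(w)=F_Y(v)$ for $c$-free words and invoking soficity of $X$, are exactly the paper's two preliminary claims. The problem is your third and fourth steps, where you candidly stop at the crux. Counting only the prefix-type sets gives $|F_Y(n_0)|\ge r(n_0-1)$ and hence $r(n_0-1)\le n_0=(n_0-1)+1$, which, as you note, is perfectly consistent with $r(m)=m+1$ for all $m$; your proposed ways out (forcing a double jump in $r$, or forcing a factor-type set distinct from every prefix-type set) are speculative, are not obviously true, and are not established. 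As written, the argument does not reach a contradiction, so there is a genuine gap.

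The paper closes exactly this gap with a different, and simpler, pigeonhole. It partitions $L_{n}(Y)$ into the $n+1$ classes $S_0,S_1,\dots,S_{n}$ according to the position of the last $c$ (with $S_0$ the $c$-free words, which is nonempty since $X\subseteq Y$). Since $|F_Y(n)|\le n$, some $S_k$ with $k>0$ contributes no follower set beyond those of $S_0,\dots,S_{k-1}$ --- the $c$-free class $S_0$ supplies precisely the extra pigeonhole slot your count is missing. Crucially, the paper does not try to prove that your prefix-type automaton stabilizes within itself: a word in $S_k$ is allowed to share its follower set with a word in $S_0$, i.e.\ a prefix-type set may be absorbed by a factor-type set. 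The paper then iterates: for any long word beginning with $c$ and containing no other $c$, its length-$k$ prefix extends backward to an element of $S_k$, whose follower set matches some $S_i$ with $i<k$; by Lemma~\ref{lengthen} the prefix is replaced either by a word of length $i<k$ beginning with $c$ (if $i\ne 0$) or by a $c$-free word (if $i=0$). Repeating, every follower set of $Y$ lands in the finite collection consisting of follower sets of $c$-free words together with follower sets of words of length less than $k$, giving soficity. If you want to salvage your write-up, replace the dichotomy at the end of your third step with this combined pigeonhole over $S_0,\dots,S_n$ and the shortening iteration; the stabilization-of-$r$ route would require ruling out the extremal configuration you describe, and nothing in your sketch does so.
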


\begin{proof}

We begin with two preliminary observations. Firstly, $X \subseteq Y$, since any point of $X$ is a limit of finite words in $L(X)$, all of which are suffixes of code words, which are themselves in $L(Y)$. We also note that any word in $L(Y)$ without a $c$ must be a subword of a code word, and therefore in $L(X)$. 

Secondly, for any word $ucv \in L(Y)$, $F_Y(ucv) = F_Y(cv)$. Clearly $F_Y(ucv) \subseteq F_Y(cv)$. Let $s \in F_Y(cv)$. Because
$uc$ is the suffix of a concatentation of code words and $vs$ is the beginning of a concatenation of code words in $Y$, $ucvs$ occurs in $Y$ and therefore $F_Y(ucv) \supseteq F_Y(cv)$.

We begin our proof by claiming that there are only finitely many follower sets in $Y$ of words not containing the letter $c$. 
Given any word $w \in L(Y)$, if $w$ does not contain a $c$ then $w \in L(X)$. 
There are only finitely many follower sets in $X$, so it is sufficient to show that for any $w, v \in L(X)$, $F_X(w) = F_X(v)$ implies $F_Y(w) = F_Y(v)$. To that end, let $F_X(w) = F_X(v)$ and consider any $s \in F_Y(w)$. If $s$ does not contain the letter $c$, then $ws$ is a limit of longer and longer words in $\mathcal{W}$, and since all such words are in $L(X)$, $ws$ occurs in $X$, i.e. $s \in F_X(w)$. Since $F_X(w) = F_X(v)$, $s \in F_X(v)$, i.e. $vs$ also occurs in $X$. Since $Y \supseteq X$, $vs$ occurs in $Y$ as well, and so $s \in F_Y(v)$. 

On the other hand, if $s$ contains the letter $c$ and $s \in F_Y(w)$, then $s = s'cs''$ for some $s'$ not containing $c$ ($s'$ may be the empty word). By the same logic as above, $ws' \in L(X)$, therefore $vs' \in L(X)$, and so $vs'$ occurs as a suffix of some word in $\mathcal{W}$. But then, $vs' c$ is a suffix of some code word, and so $vs'cs''$ occurs in $Y$. 

We have shown that in both cases, $s \in F_Y(w)$ implies $s \in F_Y(v)$, and so $F_Y(w) \subseteq F_Y(v)$. By the same argument, $F_Y(v) \subseteq F_Y(w)$, giving $F_Y(w) = F_Y(v)$. Therefore there are only finitely many follower sets in $Y$ of words not containing $c$. 

Now, we assume that $n$ is such that $|F_Y(n)| \leq n$. Partition $L_n(Y)$ into $n+1$ sets based on the last appearance of the letter $c$ in the word--the first set $S_0$ consists of words with no $c$, the second set $S_1$ consists of words ending with $c$, the third $S_2$ consists of words ending with $c$ followed by another letter that is not $c$, and so on, up to the final set $S_n$ which consists of words beginning with a $c$ followed by $n-1$ other symbols which are not $c$. Since $X \subseteq Y$, there exist words in $L(Y)$ of every length without any $c$ symbols, implying that $S_0 \neq \varnothing$. Therefore, there must exist $k > 0$ so that all follower sets 
(in $Y$) of words in $S_k$ are also follower sets (in $Y$) of some word in $S_i$ for some $i < k$; else each of the $n+1$ sets $S_i$ would contribute a follower set not in any previous one, contradicting $|F_Y(n)| \leq n$. 

Let $w$ be a word in $L(Y)$ of length at least $k$. Our goal is 
to show that $F_Y(w)$ is either equal to one of the finitely many follower sets of words without a $c$ or to 
the follower set of a word of length less than $k$.
Clearly, if $w$ does not contain a $c$, we are done, so suppose $w$ contains the letter $c$. As noted earlier, 
$F_Y(w)$ is unchanged if all letters before the last occurrence of $c$ are removed from $w$. 
If this removal results in a word of length less than $k$, then again we are done. 
So let us proceed under the assumption that $w$ begins with $c$, has length $k$ or greater and contains no other $c$ symbols.

Let $p$ denote the $k$-letter prefix of $w$. 
Since $p$ begins with $c$, $p$ can be arbitrarily extended backwards in any way to yield an $n$-letter word $p'$ which has the same follower set as $p$. Note that $p' \in S_k$, and so there exists $i < k$ and $p'' \in S_i$ so that $F_Y(p) = F_Y(p') = F_Y(p'')$. There are two cases. If $i \neq 0$, then we may again remove the letters of $p''$ before the final $c$ symbol to yield a word $p'''$ of length $i < k$ for which $F_Y(p) = F_Y(p''')$. Then, we replace the prefix $p$ of $w$ by $p'''$ to yield a new word $w'$ with strictly smaller length, which still begins with a $c$ and contains no other $c$ symbols, and for which 
$F_Y(w) = F_Y(w')$ by Lemma ~\ref{lengthen}. We then repeat the above steps. If at each step, $i \neq 0$, then eventually $w$ will be shortened to a word of length at most $k$ with the same follower set in $Y$, of which there are clearly only finitely many.

The only other case is that at some point, the prefix of length $k$ has the same follower set in $Y$ as a word in $S_0$. Then, again by Lemma ~\ref{lengthen} that prefix can be replaced by the word in $S_0$, yielding a word with no $c$ symbols with the same follower set 
in $Y$ as $w$. There are only finitely many follower sets in $Y$ of words not containing $c$. We have then shown that $F_Y(w)$ (for arbitrary $w$ of length at least $k$) has follower set in $Y$ from a finite collection (namely all follower sets in $Y$ of words with no $c$ and all follower sets in $Y$ of words with length at most $k-1$), which implies that $Y$ is sofic.

\end{proof}

\begin{remark}
The class of coded subshifts $Y$ which may be created as in Theorem~\ref{coded} includes all so-called $S$-gap shifts ~\cite{LindMarcus} (with $X = \{0^{\infty}\}$ and $c = 1$) and the reverse context-free shift of \cite{Pavlov} (with $X = \{a,b\}^{\mathbb{Z}}$ and $c$ as in the theorem).

%Examples of shifts satisfying the hypotheses of Theorem ~\ref{coded} include mirror shifts, square shifts, and S-gap shifts (where $X$ is the single point $0^\infty$). (FLESH THIS OUT?)
\end{remark}

\bibliographystyle{amsplain}
\bibliography{followers}

\end{document}